\documentclass{amsart}

 \usepackage{amsmath,amsfonts,amsthm,amssymb,graphics,graphicx, verbatim}

 \newtheorem{thm}{Theorem}[section]
 \newtheorem{defin}[thm]{Definition}
 \newtheorem{cor}[thm]{Corollary}
 \newtheorem{lem}[thm]{Lemma}
 
 \newtheorem{prop}[thm]{Proposition}
 \newtheorem{example}[thm]{Example}
\newtheorem{remark}[thm]{Remark}

\newcommand{\norm}[1]{\left|\!\left|{#1}\right|\!\right|}
 \newcommand{\R}{\ensuremath{\mathbb{R}}}

 \newcommand\tlide[1]{\tilde{#1}}

\newcommand\Id{\operatorname{Id}}

\title[The quantisation of normal velocity does not concentrate on hypersurfaces]{The quantisation of normal velocity does not concentrate on hypersurfaces}

\author{Melissa Tacy}

\email{mtacy@maths.otago.ac.nz}

\address{Department of Mathematics and Statistics, University of Otago, New Zeland}


\thanks{This research was partially completed while the author was a Research Fellow at the Australian National University supported by ARC grant DP150102419}

\begin{document}

  \begin{abstract}
 We seek to extend  work by Christianson-Hassell-Toth \cite{CHT} on restrictions of Neumann data of Laplacian eigenfunctions to interior hypersurfaces to a general semiclassical setting. In the semiclassical regime the appropriate generalisation is to study the restrictions of the function $v=\nu(x,hD)u$ where $\nu(x,hD)$ is the operator defined by quantising the normal velocity observable. For the Laplacian $\nu(x,hD)=\frac{1}{2}hD_{\nu}$ where $\nu$ is the normal to the hypersurface. We find that $\norm{\nu(x,hD)u}_{L^{2}(H)}\lesssim\norm{u}_{L^{2}(M)}$ provided $u$ is an $O_{L^{2}}(h)$ quasimode of the semiclassical pseudodifferential operator $p(x,hD)$. This statement should be interpreted as a statement of non-concentration for the quantisation of normal velocity.
 \end{abstract}
\maketitle

Consider a Dirichlet eigenfunction $u$ of the Laplace-Beltrami operator on a smooth Riemannian manifold $(M,g)$, that is
$$\begin{cases}
-\Delta_{g}u=\lambda^{2}u&\mbox{in}\;M\\
u|_{\partial{}M}=0.&\end{cases}$$
Rellich \cite{R40}, Bardos-Lebeau-Rauch \cite{BLR}, G{\'e}rard-Leichtnam \cite{GL93}, and Hassell-Tao \cite{HTao02, HTao10} showed that the Neumann boundary data is bounded. That is
\begin{equation}\norm{\lambda^{-1}\partial_{\nu}u}_{L^{2}(\partial{}M)}\lesssim\norm{u}_{L^{2}(M)}\label{boundaryest}\end{equation}
where $\nu$ is the normal to the the boundary $\partial{}M$. One may then naturally ask whether \eqref{boundaryest} continues to hold for interior hypersurfaces. By considering the function $v(t,x)=e^{i\lambda{}t}u(x)$ as  solution to the wave equation we can see from Tataru \cite{tataru98} that this is indeed the case. More recently Christianson, Hassell and Toth \cite{CHT} obtained the equivalent  estimate for eigenfunctions of semiclassical operators of the form $(h^{2}\Delta+V(x))$ restricted to interior hypersurfaces $H\subset{}M$, that is 
$$\norm{h\partial_{\nu}u}_{L^{2}(H)}\lesssim\norm{u}_{L^{2}(M)}.$$
 This estimate should be seen as a statement of non-concentration. Note that by Burq-G\'{e}rard-Tvetkov \cite{BGT} we know that there are eigenfunctions $u$ (highest weight spherical harmonics) with very high $L^{2}$ mass on $H$. In particular there exist fixed constants $c_{1}$ and $c_{2}$ and a sequence of eigenfunction  $u_{\lambda}$ such that
$$c_{1}\lambda^{1/4}\norm{u_{\lambda}}_{L^{2}(M)}\leq{}\norm{u_{\lambda}}_{L^{2}(H)}\leq{}c_{2}{}\lambda^{1/4}\norm{u_{\lambda}}_{L^{2}(M)}\quad{}\text{as }\lambda\to\infty.$$
However these eigenfunctions have comparatively small, $O(\lambda^{1/2})$, normal derivative so for this class of examples
$$c_{1}\lambda^{-1/4}\leq\norm{\lambda^{-1}\partial_{\nu}u}_{L^{2}(H)}\leq{}c_{2}\lambda^{-1/4}\norm{u}_{L^{2}(M)}.$$
In this paper we move the problem into a semiclassical setting to gain some intuition from quantum-classical correspondence principles. We will state a general semiclassical result that holds for quasimodes of any semiclassical pseudodifferential operator with smooth symbol.

For a smooth symbol $p(x,\xi)$ understood to represent the total (conserved) energy of a system we define the classical flow on phase space by
\begin{equation}
\begin{cases}
\dot{x}(t)=\nabla_{\xi}p(x,\xi)\\
\dot{\xi}(t)=-\nabla_{x}p(x,\xi).\end{cases}\label{classicalflow}\end{equation}
The simplest example of such a system is that of free particle motion  given by the symbol $p(x,\xi)=|\xi|_{g}^{2}$. In the classical setting observables are given by symbols $q(x,\xi)$ defined on phase space.  We can then move to the semiclassical setting by quantising these symbols to obtain semiclassical pseudodifferntial operators
$$q(x,hD)u=Op(q(x,\xi))u=\frac{1}{(2\pi{}h)^{n}}\iint{}e^{\frac{i}{h}\langle{}x-y,\xi\rangle}q(x,\xi)u(y)d\xi{}dy.$$
The Laplace operator is obtained by quantising the symbol $p(x,\xi)=|\xi|_{g}^{2}$ and therefore is the quantisation of the energy observable of free particle motion. For a hypersurface $H=\{x\mid{}x_{1}=0\}$ with $\lambda^{-1}=h$ we may write $p(x,\xi)$ in Fermi coordinates so that
$$p(x,\xi)=\xi_{1}^{2}+q(x,\xi').$$
Therefore the operator $\lambda^{-1}\partial_{x_{1}}$ is (up to constants) the quantisation of the symbol $\partial_{\xi_{1}}p(x,\xi)$ or the quantisation of the normal velocity observable. 

A productive intuition  is to consider $u$ as being comprised of small wave packets, localised in phase space, that propagate according to the classical flow. Therefore we expect to see concentration only when the packets spend a long time trapped near the hypersurface. For free particle motion such trajectories must have small normal velocity and so a packet tracking along such a trajectory is not expected to make a large contribution to $hD_{x_{1}}u$. The large contributions come from packets moving along trajectories with normal velocity bounded below. However such packets spend little time near the hypersurface and are known not to concentrate \cite{tacy09}.  

We can of course define a classical flow given by \eqref{classicalflow} for any symbol $p(x,\xi)$ so in the semiclassical setting the analogous question is: does the quantisation of normal velocity concentrate? That is if $H$ is a smooth embedded hypersurface with normal vector $\nu(x)$ and $\nu(x,\xi)$ is given by
 $$\nu(x,\xi)=\nu(x)\cdot\nabla_{\xi}p(x,\xi)$$
 can we say that
 $$\norm{\nu(x,hD)u}_{L^{2}(H)}\lesssim{}\norm{u}_{L^{2}(M)}?$$
 In this paper we answer this question in the affirmative under the assumptions that $u$ is semiclassically localised (Definition \ref{localised}) and an $O_{L^{2}}(h)$ quasimode of $p(x,hD)$ (Definition \ref{quasimode}).

 \begin{defin}\label{localised}
 We say $u$ is semiclassically localised if there exists $\chi\in{}C_{c}(T^{\star}M)$ such that
$$u=\chi(x,hD)u+O_{\mathcal{S}}(h^{\infty}).$$
where  ${\mathcal S}$ is the space of Schwartz functions.
 \end{defin}

 \begin{defin}\label{quasimode}
 Let $u\in{}L^{2}$ we denote the quasimode error of $u$ with respect to an operator $p(x,hD)$ as
 $$E_{p}[u]=p(x,hD)u$$
 We say that $u$ is an $O_{L^{2}}(h^{\beta})$ quasimode of a semiclassical pseudodifferential operator $p(x,hD)$ if
 $$\norm{E_{p}[u]}_{L^{2}(M)}\lesssim{}h^{\beta}\norm{u}_{L^{2}(M)}.$$
  Where there is no ambiguity in $p(x,hD)$ we drop the subscript and simply write $E[u]$.
 \end{defin}
 
 The main theorem of this paper is therefore  Theorem \ref{semiclassicalthm}.

 \begin{thm}\label{semiclassicalthm}
 Let $(M,g)$ be a smooth, compact  Riemannian manifold of dimension $n$ and let $H$ be a smooth embedded interior hypersurface. Suppose $u(h)$ is a family of semiclassically localised, $O_{L^{2}}(h)$ quasimodes of  a semiclassical pseudodifferential operator $p(x,hD)$ with smooth, real symbol $p(x,\xi)$.  Then
 $$\norm{\nu(x,hD)u}_{L^{2}(H)}\lesssim{}\norm{u}_{L^{2}(M)}$$
 for $\nu(x,hD)$ the semiclassical pseudodifferential operator with symbol
 $$\nu(x,\xi)=\nu(x)\cdot{}\nabla_{\xi}p(x,\xi).$$
 \end{thm}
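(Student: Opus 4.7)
The strategy is a positive-commutator (Rellich-type) argument keyed on the distributional identity
\[
\{p(x,\xi),\,\operatorname{sgn}(x_1)\nu(x,\xi)\} = 2\delta(x_1)\nu(x,\xi)^2 + \operatorname{sgn}(x_1)\{p,\nu\}(x,\xi),
\]
which says that the Poisson bracket of $p$ against the multiplier $\operatorname{sgn}(x_1)\nu$ produces precisely the $\delta$-concentrated $\nu^2$ on $H$ that we want to control. To make this rigorous I would regularize: let $\alpha_\epsilon(x_1)$ be a smooth, uniformly bounded approximation of $\operatorname{sgn}(x_1)$ with $\alpha_\epsilon'\ge 0$ supported in $[-\epsilon,\epsilon]$ and $\int\alpha_\epsilon' = 2$, and let $\zeta(x,\xi)\in C_c^\infty(T^*M)$ be a phase-space cutoff equal to $1$ on the essential support of $u$ (available by semiclassical localization). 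The smoothed multiplier is $a_\epsilon(x,\xi) = \alpha_\epsilon(x_1)\nu(x,\xi)\zeta(x,\xi)$, whose Poisson bracket with $p$ is $\alpha_\epsilon'(x_1)\nu^2\zeta + \alpha_\epsilon\{p,\nu\zeta\}$.

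By the Weyl calculus,
\[
\tfrac{i}{h}\bigl[p(x,hD),\,a_\epsilon(x,hD)\bigr] = \{p,a_\epsilon\}(x,hD) + E_\epsilon,
\]
with $\|E_\epsilon\|_{L^2\to L^2} = O(h\epsilon^{-2})$, the blow-up reflecting the growth of the $C^2$ seminorms of $\alpha_\epsilon$. Pairing against $u$ and using $p(x,hD)u = O_{L^2}(h)$ together with essential self-adjointness of $p(x,hD)$ gives
\[
\bigl|\langle \tfrac{i}{h}[p,a_\epsilon]u,u\rangle\bigr|\le \tfrac{2}{h}\|p(x,hD)u\|_{L^2}\|a_\epsilon(x,hD)u\|_{L^2} = O(1),
\]
uniformly in $\epsilon$, and the tangential piece $\langle\alpha_\epsilon\{p,\nu\zeta\}(x,hD)u,u\rangle$ is also uniformly $O(1)$ (since $\alpha_\epsilon$ is uniformly bounded and $\{p,\nu\zeta\}$ has compact phase-space support). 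Using Weyl composition to replace $\nu^2(x,hD)$ by $\nu(x,hD)^*\nu(x,hD) + O_{L^2\to L^2}(h)$ on the wavefront of $u$ then yields the key mesoscopic estimate
\[
\int \alpha_\epsilon'(x_1)\,|\nu(x,hD)u|^2\,dx \le C + O(h\epsilon^{-2}).
\]

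The final step is to pass from this $\epsilon$-smeared bound to the genuine restriction $\|\nu(x,hD)u\|_{L^2(H)}^2$, and this is where I expect the main obstacle. The error $h\epsilon^{-2}$ forces $\epsilon\gtrsim h^{1/2}$, whereas $v = \nu(x,hD)u$ oscillates on the semiclassical scale $h$, so the $\epsilon$-average controls $v|_H$ only up to corrections. I would close the gap by coupling the preceding bound with a second, analogous Rellich-type estimate applied to $hD_{x_1}v$, and then invoking the semiclassical trace inequality
\[
\|v\|_{L^2(H)}^2 \lesssim \epsilon^{-1}\|v\|_{L^2(|x_1|<\epsilon)}^2 + \epsilon h^{-2}\|hD_{x_1}v\|_{L^2(|x_1|<\epsilon)}^2,
\]
feeding in the Rellich bounds for both $v$ and $hD_{x_1}v$ and optimizing over $\epsilon$. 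A naive application of this trace inequality, using only the global bounds $\|v\|_{L^2},\|hD_{x_1}v\|_{L^2} = O(1)$, merely recovers the Sobolev-style $\|v\|_{L^2(H)} = O(h^{-1/2})$; the heart of the proof is therefore the Rellich bootstrap, which supplies the tight \emph{local} $L^2$ control of both $v$ and $hD_{x_1}v$ on $\epsilon$-neighborhoods that the trace inequality needs in order to reach the $O(1)$ bound claimed in Theorem \ref{semiclassicalthm}.
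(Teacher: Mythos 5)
Your positive-commutator (Rellich) idea is a genuinely different route from the paper, which proves the theorem by a dyadic decomposition of $\nu(x,\xi)$ into annuli $|\nu|\sim 2^jh^{1/2}$, a time-averaging via Duhamel's formula for the semiclassical propagator of $2^{-j}h^{-1/2}p(x,hD)$, and then stationary-phase plus Cotlar--Stein almost-orthogonality on each dyadic piece. Your commutator identity and the resulting mesoscopic estimate $\int\alpha_\epsilon'(x_1)\,|\nu(x,hD)u|^2\,dx \lesssim 1 + O(h\epsilon^{-2})$ are plausible (and in the Laplacian case, where $\nu(x,hD)=2hD_{x_1}$ is a differential operator, a version of this with $\operatorname{sgn}(x_1)$ unsmoothed is essentially what Hassell--Tao and Christianson--Hassell--Toth do). But for general $p$ the smoothing is unavoidable, and it is precisely the conversion of the $\epsilon$-smeared estimate to the restriction estimate where your argument has a genuine gap.

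Concretely: the error $O(h\epsilon^{-2})$ forces $\epsilon\gtrsim h^{1/2}$. With $\alpha_\epsilon'\sim\epsilon^{-1}\mathbf{1}_{|x_1|<\epsilon}$, your mesoscopic bound yields
\[
\|v\|_{L^2(|x_1|<\epsilon)}^2 \lesssim \epsilon + h\epsilon^{-1},\qquad v:=\nu(x,hD)u,
\]
and an identical Rellich run with multiplier $\alpha_\epsilon(x_1)\xi_1^2\nu\zeta$ gives the same order of bound for $hD_{x_1}v$. Feeding these into your trace inequality, the second term becomes
\[
\epsilon h^{-2}\|hD_{x_1}v\|_{L^2(|x_1|<\epsilon)}^2 \lesssim \epsilon h^{-2}\bigl(\epsilon + h\epsilon^{-1}\bigr) = \epsilon^2 h^{-2} + h^{-1} \;\ge\; h^{-1}
\]
for every admissible $\epsilon\ge h^{1/2}$. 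So the trace inequality, even with your Rellich bootstrap for $hD_{x_1}v$, gives at best $\|v\|_{L^2(H)}^2\lesssim h^{-1}$, i.e.\ the trivial Sobolev bound $\|v\|_{L^2(H)}\lesssim h^{-1/2}$, not the $O(1)$ of the theorem. The obstruction is structural: $v$ oscillates at scale $h$ in $x_1$, so $hD_{x_1}v$ is generically the same size as $v$, and a bare trace inequality cannot beat the $h^{-1/2}$ loss. To go further you must exploit the fact that $v$ solves $p(x,hD)v=O_{L^2}(h)$ \emph{in a quantitative, propagative way}: on the region $|\nu|\gtrsim 1$ one factorizes $p(x,\xi)=e(x,\xi)(\xi_1-a(x,\xi'))$ and views $x_1$ as an evolution parameter, so that $\|v(x_1,\cdot)\|_{L^2_{x'}}$ is essentially constant across the layer and the layer average does control the cross-section. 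Where $\nu$ degenerates the factorization fails, and this is exactly the regime the paper handles by its dyadic rescaling, parametrix and stationary phase. Your argument, as written, never uses the equation for $v$ except through $\|hD_{x_1}v\|_{L^2}$, and that is not enough; some version of the propagation-in-$x_1$ input is the missing ingredient.
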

 
 \begin{remark}
 If $u$ is an $O_{L^{2}}(h)$ quasimode of the standard quantisation $p(x,hD)$ it is also an $O_{L^{2}}(h)$ quasimode of any other quantisation (such as the Weyl quantisation) so Theorem \ref{semiclassicalthm} holds for these quantisations too.
 \end{remark}
 
 \begin{remark}
 We will choose to work in local coordinates where $H=\{x\mid{}x_{1}=0\}$. In these coordinates
 $$\nu(x,\xi)=\partial_{\xi_{1}}p(x,\xi).$$
 \end{remark}
 
Eigenfunctions of the Laplacian can be written as solutions to the semiclassical equation $p(x,hD)u=0$ where $p(x,hD)$ is the  semiclassical pseudodifferential operator with symbol $p(x,\xi)=|\xi|_{g}-1$ and therefore fall under the scope of Theorem \ref{semiclassicalthm}. This allows us to reproduce bounds on the Neumann data for interior hypersurfaces.
 
 \begin{cor}\label{eigenfunctionthm}
Let $(M,g)$ be a smooth Riemannian manifold and $H$ a smooth embedded interior hypersurface with normal $\nu(x)$. If $u$ is an $L^{2}$ normalised  approximate Laplacian eigenfunction, that is
 $$\norm{u}_{L^{2}(M)}=1\quad\mbox{and}\quad\norm{-(\Delta_{g}-\lambda^{2})u}_{L^{2}(M)}\lesssim{}\lambda$$
 then,
 $$\norm{\lambda^{-1}\partial_{\nu}u}_{L^{2}(H)}\lesssim{}1.$$
\end{cor}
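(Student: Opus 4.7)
The plan is to apply Theorem \ref{semiclassicalthm} after recasting the eigenvalue problem in semiclassical form. Set $h=\lambda^{-1}$ and take $p(x,\xi)=|\xi|_{g}^{2}-1$; the equation $-\Delta u=\lambda^{2}u$ is then equivalent to $p(x,hD)u=0$, which is certainly $O_{L^{2}}(h)$. Two ingredients then remain: verifying that $u$ is semiclassically localised in the sense of the definition given above, and identifying the restriction $\nu(x,hD)u|_{H}$ with $2\lambda^{-1}\partial_{\nu}u|_{H}$.

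For the localisation I would use the semiclassical functional calculus. Since $(-h^{2}\Delta)u=u$, picking $\chi_{0}\in C_{c}^{\infty}(\R)$ with $\chi_{0}\equiv 1$ near $1$ gives $u=\chi_{0}(-h^{2}\Delta)u$. By the standard Helffer--Sj\"ostrand construction, $\chi_{0}(-h^{2}\Delta)=\chi(x,hD)+O_{L^{2}\to L^{2}}(h^{\infty})$ for some $\chi\in C_{c}^{\infty}(T^{\star}M)$ supported in a shell about the cosphere bundle $\{|\xi|_{g}=1\}$. Smoothness of $u$ then upgrades the remainder to $O_{\mathcal{S}}(h^{\infty})$, so $u$ is semiclassically localised.

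For the identification near $H$, I would work in Fermi coordinates adapted to the hypersurface, so that $H=\{x_{1}=0\}$ and $g^{1j}(0,x')=\delta^{1j}$ along $H$. In these coordinates
\[\nu(x,\xi)=2g^{1j}(x)\xi_{j}=2\xi_{1}+x_{1}\,r(x,\xi)\]
with $r$ linear in $\xi$. Since left quantisation in $\xi$ commutes with multiplication by $x_{1}$, we obtain $\nu(x,hD)u=-2ih\partial_{x_{1}}u+x_{1}R(x,hD)u$, and the second term vanishes pointwise at $x_{1}=0$. Hence $\nu(x,hD)u|_{H}=-2i\lambda^{-1}\partial_{\nu}u|_{H}$ exactly. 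Stitching a finite cover of $H$ by such Fermi charts with a partition of unity and applying Theorem \ref{semiclassicalthm} then gives the corollary. The main point requiring care is this coordinate identification, in particular absorbing the $O(h)$ discrepancies that appear if one uses a symmetrised rather than the left quantisation convention; these are lower order and harmless on semiclassically localised functions. There is no serious analytical obstacle beyond this bookkeeping, since the heavy lifting has been done in Theorem \ref{semiclassicalthm}.
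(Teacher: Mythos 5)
Your proposal is correct and follows essentially the same route as the paper: pass to Fermi coordinates adapted to $H$, identify the quantised normal velocity with $2hD_{\nu}$ along $H$, and invoke Theorem~\ref{semiclassicalthm} with $h=\lambda^{-1}$. You are in fact more careful than the paper's very brief proof in two respects — you explicitly verify the semiclassical localisation hypothesis via the functional calculus, and you handle the possibility that $g^{1j}=\delta^{1j}$ only along $H$ rather than in the whole tubular neighbourhood — but neither changes the underlying argument.
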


\begin{proof}
Since
$$\norm{(-\Delta_{g}-\lambda^{2})u}_{L^{2}(M)}\lesssim{}\lambda$$
when we rescale with $\lambda^{-1}=h$
$$\norm{(h^{2}\Delta_{g}-1)u}_{L^{2}(M)}\lesssim{}h$$
and so $u$ is an $O_{L^{2}}(h)$ quasimode.  Suppose $\chi_{K}(r)$ is supported in $K\leq{}r\leq{}2K$ for some large $K$ then consider $\chi_{K}(h^{2}\Delta_{g}-1)u$. Since the principal symbol of $h^{2}\Delta_{g}-1$ is bounded below by $K$ on the support of $\chi_{K}$ we can invert the operator. Now since
$$\norm{(h^{2}\Delta-1)\chi_{K}(x,hD)u}_{L^{2}(M)}\lesssim{}h\norm{u}_{L^{2}(M)}$$
inverting $(h^{2}\Delta-1)$ gives us
$$\norm{\chi_{K}(x,hD)u}_{L^{2}(M)}\lesssim{}hK^{-1}\norm{u}_{L^{2}(M)}.$$
By standard semiclassical Sobolev estimates
$$\norm{hD_{x_{1}}\chi_{K}(h^{2}\Delta_{g}-1)u}_{L^{2}(H)}\lesssim{}h^{-1/2}K^{3/4}\norm{\chi_{K}(x,hD)u}_{L^{2}(M)}\lesssim{}h^{1/2}K^{-1/4}\norm{u}_{L^{2}(M)}.$$
To treat the region where $|\xi|_{g}>2$ we may then dyadically decompose to regions $2^{k+1}\leq{}|\xi|^{2}_{g}-1\leq{}2^{k+2}$ and sum to obtain
$$\norm{hD_{x_{1}}(1-\chi(x,hD))u}_{L^{2}(H)}\lesssim{}h^{1/2}\norm{u}_{L^{2}(M)}$$
which is considerably better than we want. Therefore we may focus on the region where $|\xi|_{g}\leq{}2$.  Let $\chi(x,\xi)$ be supported in $|\xi|_{g}\leq{}2$. 
Since $u$ is a quasimode and
$$(h^{2}\Delta-1)\chi(x,hD)u=\chi(x,hD)(h^{2}\Delta-1)u+O_{L^{2}}(h),$$
the function $\chi(x,hD)u$ is also a quasimode.  Working in Fermi normal coordinates in a small tubular neighbourhood of the hypersurface we may write
$$-h^{2}\Delta_{g}-1=p(x,hD)$$ where $p(x,hD)$ has principal symbol
$$p(x,\xi)=\xi_{1}^{2}+q(x,\xi').$$
Therefore 
$$\nu(x,\xi)=2\xi_{1}$$
$$\nu(x,hD)=2hD_{\nu}$$ and so by Theorem \ref{semiclassicalthm} with $h=\lambda^{-1}$
 $$\norm{\lambda^{-1}\partial_{\nu}u}_{L^{2}(H)}\lesssim{}1$$
 as required.

 \end{proof}
 
 This paper is organised in the following fashion. In Section \ref{sec:levelsets} we set out the basic semiclassical analysis used in this paper and prove an estimate on the $L^{2}$ mass of a quasimode concentrated in a $h$ dependent region of a level set $q(x,\xi)=K$. In Section \ref{sec:hypersurface} we specialise to the case where the level set is a hypersurface given by $x_{1}=0$ and prove Theorem \ref{semiclassicalthm}. Section \ref{sec:curved} uses the results of Sections \ref{sec:levelsets} and \ref{sec:hypersurface} to reproduce results on the restriction of eigenfunctions to curved hypersurfaces (the original results are due to Tataru \cite{tataru98} and Hu \cite{Hu} for Laplacians and Hassell-Tacy \cite{HTacy} for semiclassical operators). Section \ref{sec:sharp} provides some sharp examples to Theorem \ref{semiclassicalthm}.
 
 \subsection*{Acknowledgements}
 The author would like to thank Andrew Hassell for suggesting an investigation into the semiclassical result and for many helpful discussions. The author would like also to acknowledge the comments and suggestions of the reviewers which have greatly improved the paper.

 \section{Concentration localised near level sets}\label{sec:levelsets}

 Theorem \ref{semiclassicalthm} should be taken as a statement of non-concentration near the hypersurface $H$.  We can, using simple commutator relationships,  prove a weaker version that tells us about the $L^{2}$ mass concentrated in a $h^{\alpha}$ thickened neighbourhood of $H$. Actually we can make considerably more general statements about the concentration of an eigenfunction near a level set $q(x,\xi)=K$. To prove these estimates it is not necessary to assume any geometry of the level set, it does not even have to be a hypersurface. The key idea is that the concentration properties of $u$ should follow from the behaviour of the classical flow (via the classical-quantum correspondence principle). Particularly, that for trajectories of the classical flow to remain on a level set that level set must itself be invariant under the flow. That is if the classical flow if defined by
 $$\begin{cases}
 \dot{x}=\nabla_{\xi}p(x,\xi)\\
 \dot{\xi}=-\nabla_{x}p(x,\xi)\end{cases}$$
 then $q(x(t),\xi(t))$ must be independent of time. For general classical observable $q(x,\xi)$ we know that 
 $$\dot{q}(x,\xi)=\{q,p\}(x,\xi).$$
 The appropriate quantum analogue is then that a quasimode $u$ cannot concentrate in $L^{2}$ mass near a level set unless it is localised near a point where $\dot{q}(x,\xi)=0$. In this section we quantify this statement. For $u$ an $O_{L^{2}}(h)$ quasimode of $p(x,hD)$, we will prove estimates of the form
\begin{equation}\norm{\dot{q}(x,hD)u^{K,\alpha}}_{L^{2}}\lesssim{}h^{\alpha/2}\norm{u}_{L^{2}}\label{leveldecay}\end{equation}
where $u^{K,\alpha}$ is the component of $u$ localised in an $h^{\alpha}$ thickened region of $q(x,\xi)=K$. Estimate \eqref{leveldecay} tell us that where $\dot{q}(x,\xi)$ is large only a small amount of the $L^{2}$ mass of $u$ may be localised near the level set. On the other hand if $\dot{q}(x,\xi)\to{}0$ as $h\to{}0$ large concentrations can occur (in the extreme case all of the $L^{2}$ mass of $u$ may be localised within a $h^{\alpha}$ scale of $q(x,\xi)=K$. 
 
  In this section we work only with norms over the full manifold so to simplify notation we denote $L^{2}(M)$ by $L^{2}$. To state such results and throughout the rest of the paper we will need a number of cut off functions. Let $\chi^{i}:\R\to{}\R$ be a smooth function for $i=1,2,3$ defined by  

\begin{align*}
&  \chi^{1}(r)=\begin{cases}
 1 &|r|\leq 1\\
 0 &  |r|\geq 2\end{cases}\\
&  \chi^{2}(r)=\begin{cases}
 1 & 1\leq r\leq 2\\
 0 & r\leq{}1/2,r\geq{}5/2\end{cases}\\
&\chi^{3}(r)=\begin{cases}
 1 &r\geq 2\\
 0 &  r\leq 1.\end{cases}\end{align*}
  Let $q(x,\xi)\in{}S^{0}$ be a smooth symbol. Then for some fixed $K$ we denote 
  $$\chi^{i}_{\alpha,q}(x,\xi)=\chi^{i}(h^{-\alpha}(q(x,\xi)-K))$$ 
  and 
  $$\chi^{i}_{\alpha,q}(x,hD)u=Op(\chi^{i}_{\alpha,q}(x,\xi))u=\frac{1}{(2\pi{}h)^{n}}\iint{}e^{\frac{i}{h}\langle{}x-y,\xi\rangle}\chi_{\alpha,q}(x,\xi)u(y)d\xi dy.$$ 
  We then have the interpretation that for $i=1,2$ $\chi^{i}_{\alpha,q}(x,hD)u$ is the component of $u$ localised (at scale $h^{\alpha}$) near the set $q(x,\xi)=K$. For $i=3$ we have the interpretation that $\chi^{3}_{\alpha,q}(x,hD)$ localises $u$ to the region where $q(x,\xi)$ is positive with a $h^{\alpha}$ scale truncation. Since we assume that $u$ is semiclassically localised we can work in a compact subset of $T^{\star}M$ so we do not need to worry about defining decay of symbols as $|\xi|\to\infty$. However since we will be truncating on $h$ dependent scales we need to keep track of the loss in regularity of the symbol.
  
  \begin{defin}
  A symbol $q(x,\xi)$ is in the symbol class $h^{\beta}S^{m}$ if
  $$|D^{\gamma}_{x,\xi}q(x,\xi)|\leq{}C_{\gamma}h^{\beta-|\gamma|m}$$
  \end{defin}
  
  We will often need to compute the symbol of the compositions of two semiclassical pseudodifferential operators We use the standard expansion
  \begin{multline}
  p(x,hD)\circ{}q(x,hD)=Op(c(x,\xi))=\\
  c(x,\xi)=e^{ih\langle{}D_{\xi},D_{y}\rangle}p(x,\xi)q(y,\eta)\Big|_{x=y,\xi=\eta}\\
  =\sum_{k}\frac{h^{k}}{k!}\left(\frac{\langle{}D_{\xi},D_{y}\rangle}{i}\right)^{k}a(x,\xi)q(y,\eta)\Big|_{x=y,\xi=\eta.}\label{semiexp}\end{multline}
 We refer the reader to \cite{Zworski12} for details of the proof of this expansion (via stationary phase). The sum in \eqref{semiexp} should be taken as a semiclassical asymptotic sum. That is; if $p(x,\xi)\in{}h^{\beta_{1}}S^{m_{1}}$ and $q(x,\xi)\in{}h^{\beta_{2}}S^{m_{2}}$ for $m_{1}+m_{2}\leq{}1$ then for any $N$
 $$p(x,hD)\circ{}q(x,hD)=Op(c_{N}(x,\xi))+r_{N}(x,hD)$$
 where
 $$c_{N}(x,\xi)=\sum_{k=0}^{N}\frac{h^{k}}{k!}\left(\frac{\langle{}D_{\xi},D_{y}\rangle}{i}\right)^{k}a(x,\xi)q(y,\eta)\Big|_{x=y,\xi=\eta}$$
 and
 $$\norm{r_{N}(x,hD)}_{L^{2}\to{}L^{2}}\lesssim{}h^{\beta_{1}+\beta_{2}+N(1-m_{1}-m_{2})}.$$
 Note that if $m_{1}+m_{2}<1$ we may calculate the asymptotic  to any order (in $h$) error. If however $m_{1}+m_{2}=1$ we cannot get a full semiclassical expansion however we can often get some reasonable composition estimates.

To reduce the number of terms displayed  in any one expansion we adopt the following abuse of notation, that the exact value of a remainder symbol can change from line to line however the support and regularity properties remain the same. For instance we write
 $$p(x,hD)\chi^{1}_{\alpha,q}(x,hD)=\chi^{1}_{\alpha,q}(x,hD)p(x,hD)+h^{1-\alpha}\chi^{1}_{\alpha,q}(x,hD)r(x,hD)$$
and allow the symbol $r(x,\xi)$ to vary from line to line. 

We will on a number of occasions need to work with a regularised (on the scale $h^{\alpha}$) square root operator. 

 \begin{defin}\label{def:regsqrt}
 For $\alpha<1/2$ we define the regularised at scale $\alpha$ positive (and negative) square root $q^{1/2}_{\alpha,+}(x,hD)$ ($q^{1/2}_{\alpha,-}(x,hD)$) as 
 $$q_{\alpha,\pm}^{1/2}(x,hD)=Op(q^{1/2}_{\alpha,\pm}(x,\xi))$$
 where 
 $$q_{\alpha,+}^{1/2}(x,\xi)=h^{\frac{\alpha}{2}}(1-\chi^{3}(h^{-\alpha}q(x,\xi)))+(q(x,\xi))^{1/2}\chi^{3}(h^{-\alpha}q(x,\xi))$$
and
$$q_{\alpha,-}^{1/2}(x,\xi)=h^{\frac{\alpha}{2}}(1-\chi^{3}(-h^{-\alpha}q(x,\xi)))+(-q(x,\xi))^{1/2}\chi^{3}(-h^{-\alpha}q(x,\xi)).$$
By Lemma \ref{lem:invert} these operators are invertible. We denote their inverses by $q_{\alpha,\pm}^{-1/2}(x,hD).$
 \end{defin}
  
\begin{lem}\label{lem:invert}
Let $\alpha<1/2$. Suppose the regularised positive and negative powers of a symbol are given by
 $$q_{\alpha,+}^{\beta}(x,\xi)=h^{\alpha\beta}(1-\chi^{3}(h^{-\alpha}q(x,\xi)))+(q(x,\xi))^{\beta}\chi^{3}(h^{-\alpha}q(x,\xi))$$
and
$$q_{\alpha,-}^{\beta}(x,\xi)=h^{\alpha\beta}(1-\chi^{3}(-h^{-\alpha}q(x,\xi)))+(-q(x,\xi))^{\beta}\chi^{3}(-h^{-\alpha}q(x,\xi)).$$
Then both $q_{\alpha,\pm}^{\beta}(x,hD)$ are invertible and the symbols of their inverses are in $h^{-\beta}S^{\alpha}$.
\end{lem}

\begin{proof}
We will construct an inverse in the standard fashion using the semiclassical expansion \eqref{semiexp} for composition of operators. We will work only with $q^{\beta}_{\alpha,+}(x,hD)$, the proof for $q^{\beta}_{\alpha,-}(x,hD)$ is the same. First note that any derivatives applied to $q^{\beta}_{\alpha,+}(x,\xi)$ may fall on either the cut off function or on $q^{\beta}(x,\xi)$ (localised where $|q(x,\xi)|>h^{\alpha}$). Therefore any derivative can cause at most a loss of $h^{-\alpha}$ and so $q^{\beta}_{\alpha,+}(x,\xi)$ is at least in the symbol class $S^{\alpha}$. However we will need better estimates. Any derivative $D^{\gamma}$ of $q^{\beta}_{\alpha,+}(x,\xi)$ is made up of terms of the form
\begin{equation}h^{\alpha\beta}D^{\gamma_{0}}(1-\chi^{3}(h^{-\alpha}q(x,\xi)))\label{term1}\end{equation}
and
\begin{equation}q^{\beta-\gamma_{1}}\left[D^{\gamma_{1}}q(x,\xi)\right]D^{\gamma_{2}}\chi^{3}(h^{-\alpha}q(x,\xi))\label{term2}\end{equation}
with both $|\gamma_{0}|,(|\gamma_{1}|+|\gamma_{2}|)\leq{}|\gamma|$. Any terms of the form \eqref{term1} are localised to the region $q(x,\xi)\approx{}h^{\alpha}$ due to the support properties of $1-\chi^{3}$. Therefore
\begin{equation}|h^{\alpha\beta}D^{\gamma_{0}}(1-\chi^{3}(h^{-\alpha}q(x,\xi)))|\leq{}C_{\gamma}h^{\alpha\beta-|\gamma_{0}|\alpha}.\label{deriv1}\end{equation}
The size of the terms that come from \eqref{term2} depend on the size of $q(x,\xi)$. When $|q(x,\xi)|\geq{}4{}h^{\alpha}$ derivatives falling on the cut off function $\chi^{3}(h^{-\alpha}q(x,\xi))$ are zero. In this case we have
\begin{equation}\left|q^{\beta-|\gamma_{1}|}\left[D^{\gamma_{1}}q(x,\xi)\right]D^{\gamma_{2}}\chi^{3}(h^{-\alpha}q(x,\xi))\right|\leq{}\frac{C_{\gamma}}{|q(x,\xi)|^{\beta-|\gamma|}}.\label{deriv2}\end{equation}
On the other hand where $|q(x,\xi)|\leq{}4h^{\alpha}$ we obtain
\begin{equation}\left|q^{\beta-|\gamma_{1}|}\left[D^{\gamma_{1}}q(x,\xi)\right]D^{\gamma_{2}}\chi^{3}(h^{-\alpha}q(x,\xi))\right|\leq{}C_{\gamma}h^{\beta-\alpha|\gamma|}.\label{deriv3}\end{equation}
 Putting \eqref{deriv1}, \eqref{deriv2} and \eqref{deriv3} together we obtain
 \begin{equation}\left|D^{\gamma}q^{1/2}_{\alpha,+}(x,\xi)\right|\leq{}C_{\gamma}(h^{\alpha}+|q(x,\xi)|)^{\beta-|\gamma|}.\label{deriv4}\end{equation}
 Now we can construct an inverse. Let
$$b_{0}(x,\xi)=\frac{1}{q^{\beta}_{\alpha,+}(x,hD)}.$$
We want to compose $q^{\beta}_{\alpha,+}(x,hD)$ with $b_{0}(x,hD)$ and use \eqref{semiexp} to obtain a series expression for the symbol. So we need estimates on the derivatives of $b_{0}(x,\xi)$. Now $D^{\gamma}b_{0}$ is made up of terms of the form
$$\frac{1}{(q^{\beta}_{\alpha,+}(x,\xi))^{1+|\gamma_{1}|}}\left[Dq^{1/2}_{\alpha,+}(x,\xi)\right]^{|\gamma_{1}|-|\gamma_{2}|}D^{\gamma_{2}}\left(Dq^{1/2}_{\alpha,+}(x,\xi)\right)^{|\gamma_{2}|}$$
where $|\gamma_{1}|+|\gamma_{2}|=|\gamma|$. So from \eqref{deriv4} we obtain
$$\left|D^{\gamma}b_{0}(x,\xi)\right|\leq{}C_{\gamma}(h^{\alpha}+|q(x,\xi)|)^{-\beta-|\gamma|}.$$
 Clearly therefore $q^{\beta}_{\alpha,+}\in{}S^{\alpha}$ and $b_{0}\in{}h^{-\beta}S^{\alpha}$ so we can use \eqref{semiexp} to determine the symbol of the composition $q^{\beta}_{\alpha,+}(x,hD)\circ{}b_{0}(x,hD)$. From \eqref{semiexp} we have
 $$q^{\beta}_{\alpha,+}(x,hD)\circ{}b_{0}(x,hD)=Op(c_{0}(x,\xi))$$
 where
 $$c_{0}(x,\xi)=1+r_{0}(x,\xi)$$
and
 $$\left|D^{\gamma}r_{0}(x,\xi)\right|\leq{}C_{\gamma}h(h^{\alpha}+|q|)^{\beta-1-|\gamma|}(h^{\alpha}+|q|)^{-\beta-1-|\gamma|}\leq{}C_{\gamma}h(h^{\alpha}+|q(x,\xi)|)^{-2-2|\gamma|}.$$
 That is, the error from composing the principal symbols at least $h^{1-2\alpha}$ better than the identity. We may then define
 $$b_{1}(x,\xi)=\frac{r_{0}(x,\xi)}{q^{\beta}_{\alpha,+}(x,\xi)}$$
 so that
 $$q^{\beta}_{\alpha,+}(x,hD)\circ{}(b_{0}(x,hD)+b_{1}(x,hD))=Op(c_{1}(x,\xi))$$
 $$c_{1}(x,\xi)=1+r_{0}(x,\xi)-r_{0}(x,\xi)+r_{1}(x,\xi)$$
 where 
 $$\left|D^{\gamma}r_{1}(x,\xi)\right|\leq{}C_{\gamma}h^{2}(h^{\alpha}+|q(x,\xi)|)^{-4-2|\gamma|}.$$
 Then in a similar fashion to the standard case we keep recursively defining $b_{j}(x,\xi)=\frac{r_{j-1}(x,\xi)}{q^{1/2}_{\alpha,+}(x,\xi)}$ and set
 $$b_{N}(x,\xi)=\sum_{j=0}^{N}b_{j}(x,\xi).$$
 Since we gain a factor of $h^{1-2\alpha}$ with each successive term we can find a $O(h^{\infty})$ inverse. \end{proof}
 
 On a number of occasions through this paper we will need to ``commute'' these regularised square roots through cut off functions in $x_{1}$ down to the scale dictated by the uncertainty principle. 
 
 \begin{lem}\label{lem:comm}
 Let $\alpha<1/2$, $\beta\leq{}1-\alpha$  and $\chi:\R\to\R$ smooth. Then
 $$q_{\alpha,\pm}^{1/2}(x,hD)\circ{}\chi^{1}(h^{-\beta}x_{1})\circ{}q_{\alpha,\pm}^{-1/2}(x,hD)=c(x,hD)$$
 where $c(x,hD):L^{2}\to{}L^{2}$ with bound uniform in $h$  and the symbol $c(x,\xi)$ obeys
 $$|D^{N}_{x_{1}}c(x,\xi)|\leq{}C_{N}h^{-\beta N}$$
 $$|D^{\gamma}_{x',\xi}c(x,\xi)|\leq{}C_{\gamma}h^{-\alpha|\gamma|}.$$
If further $\chi$ is compactly supported, for any $M>0$
 $$|D^{N}_{x_{1}}c(x,\xi)|\leq{}C_{N}h^{-\beta N}(1+h^{-\beta}|x|)^{-M}$$
 $$|D^{\gamma}_{x',\xi}c(x,\xi)|\leq{}C_{\gamma}h^{-\alpha|\gamma|}(1+h^{-\beta}|x|)^{-M}.$$
 The same is true of $q_{\alpha,\pm}^{-1/2}(x,hD)\circ{}\chi^{1}(h^{-\beta}x_{1})q_{\alpha,\pm}^{1/2}(x,hD)$. 
 \end{lem}
 
 \begin{proof}
 Note that if $\beta<1-\alpha$ we can simply use the asymptotic expansion \eqref{semiexp} and the estimates of Lemma \ref{lem:invert}. However we do need to carry down to the scale $\beta=1-\alpha$ so we assume we are at that scale. We write
\begin{align*}q_{\alpha,\pm}^{1/2}&(x,hD)\circ{}\chi^{1}(h^{-(1-\alpha)}x_{1})\circ{}q_{\alpha,\pm}^{-1/2}(x,hD)u\\
&=\frac{1}{(2\pi{}h)^{2n}}\int{}e^{\frac{i}{h}\left(\langle{}x-y,\xi\rangle+\langle{}y-z,\eta\rangle\right)}q_{\alpha,\pm}^{1/2}(x,\xi)\chi^{1}(h^{-(1-\alpha)}y_{1})q^{-1/2}_{\alpha,\pm}(y,\eta)u(z)dyd\xi d\eta dz\\
 &=\frac{1}{(2\pi{}h)^{n}}\int{}e^{\frac{i}{h}\langle{}x-z,\eta\rangle}c(x,\eta)u(z)d\eta dz\end{align*}
 where
 $$c(x,\xi)=\frac{1}{(2\pi{}h)^{n}}\int{}e^{\frac{i}{h}\left(\langle{}x,\xi-\eta\rangle-\langle{}y,\xi-\eta\rangle\right)}q_{\alpha,\pm}^{1/2}(x,\xi)\chi^{1}(h^{-(1-\alpha)}y_{1})q^{-1/2}_{\alpha,\pm}(y,\eta)dyd\xi.$$
 Now $D^{N}_{x_{1}}c(x,\xi)$ consists of terms of the form
 $$\frac{1}{(2\pi{}h)^{n}}\int{}e^{\frac{i}{h}\left(\langle{}x,\xi-\eta\rangle-\langle{}y,\xi-\eta\rangle\right)}\left(\frac{\xi_{1}-\eta_{1}}{h}\right)^{N_{1}}D^{N_{2}}_{x_{1}}q_{\alpha,\pm}^{1/2}(x,\xi)\chi^{1}(h^{-(1-\alpha)}y_{1})q^{-1/2}_{\alpha,\pm}(y,\eta)dyd\xi$$
 where $N_{1}+N_{2}=N$. Therefore by integrating by parts in $y_{1}$ this becomes
 \begin{align*}\frac{1}{(2\pi{}h)^{n}}\int{}&e^{\frac{i}{h}\left(\langle{}x,\xi-\eta\rangle-\langle{}y,\xi-\eta\rangle\right)}D^{N_{2}}_{x_{1}}q_{\alpha,\pm}^{1/2}(x,\xi)D^{N_{1}}_{y_{1}}\left[\chi^{1}(h^{-(1-\alpha)}y_{1})q^{-1/2}_{\alpha,\pm}(y,\eta)\right]dyd\xi\\
& \frac{1}{(2\pi{}h)^{n}}\int{}e^{\frac{i}{h}\left(\langle{}x,\xi-\eta\rangle-\langle{}y,\xi-\eta\rangle\right)}b_{\alpha}^{1}(x,\xi)d_{\alpha}^{1}(y,\eta)dyd\xi\end{align*}
 where we inherit the estimates of Lemma \ref{lem:invert}
 $$|D^{\gamma}b_{\alpha}^{1}(x,\xi)|\leq{}C_{\gamma,N}(h^{\alpha}+|q(x,\xi)|)^{1/2-|\gamma|-N_{2}}$$
 $$|D^{\gamma}_{y',\eta}d_{\alpha}^{1}(y,\eta)|\leq{}C_{\gamma,N}h^{-N_{1}(1-\alpha)}(h^{\alpha}+|q(y,\eta)|)^{-1/2-|\gamma|}$$
 $$|D^{J}_{y_{1}}d_{\alpha}^{1}(y,\eta)|\leq{}C_{N}h^{-(1-\alpha)(N_{1}+J)}(h^{\alpha}+|q(y,\eta)|)^{-1/2}.$$
 Note that if $\chi$ has compact support $d_{\alpha}^{1}(y,\eta)$ is supported where $|y|\leq{}2h^{1-\alpha}$. There is a nondegenerate critical point in the phase function,
 $$\langle x,\xi-\eta\rangle-\langle y,\xi-\eta\rangle,$$
 at the point $\xi=\eta$, $x=y$. Consider
 $$I^{N}_{k_{1},k_{2}}=\frac{1}{(2\pi{}h)^{n}}\int{}e^{\frac{i}{h}\left(\langle{}x,\xi-\eta\rangle-\langle{}y,\xi-\eta\rangle\right)}b_{\alpha}^{1}(x,\xi)d_{\alpha}^{1}(y,\eta)\chi^{2}(2^{-k_{1}}h^{-\alpha}|\xi-\eta|)\chi^{2}(2^{-k_{2}}h^{-(1-\alpha)}|x-y|)dyd\xi$$
 Integration by parts in $y$ and $\xi$ and using the fact that
 $$|d^{1}_{\alpha}(x,\xi)-d^{1}_{\alpha}(y,\eta)|\leq{}(2^{k_{1}}+2^{k_{2}})$$
 gives
 $$|I^{N}_{k_{1},k_{2}}|\leq{}h^{-(1-\alpha)N}2^{-k_{1}-k_{2}}.$$
 Therefore we may dyadically sum to obtain
 $$|D^{N}_{x_{1}}c(x,\xi)|\leq{}C_{N}h^{-\beta N}.$$
 Further, if $\chi$ is compactly supported there is no critical point within $h^{1-\alpha}$ of $x_{1}$ when $|x_{1}|\gg{}h^{1-\alpha}$. Therefore in that case we obtain 
 $$|D^{N}_{x_{1}}c(x,\xi)|\leq{}C_{N}h^{-\beta N}(1+h^{-\beta}|x|)^{-M}.$$
 The derivatives  $D_{x'}c(x,\eta)$ have the same form however since $\chi^{1}(h^{-(1-\alpha)}y_{1})$ does not depend on $y'$ we get a better bound of  
\begin{align*}&|D^{\gamma}_{x'}c(x,\eta)|\leq{}C_{\gamma}h^{-\alpha|\gamma|},\\
 &|D^{\gamma}_{x'}c(x,\eta)|\leq{}C_{\gamma}h^{-\alpha|\gamma|}(1+h^{-\beta}|x|)^{-M}\quad\chi\text{ compactly supported}.\end{align*}
The derivatives $D^{N}_{\eta_{i}}c(x,\eta)$ are given by terms of the form
$$\frac{1}{(2\pi{}h)^{n}}\int{}e^{\frac{i}{h}\left(\langle{}x,\xi-\eta\rangle-\langle{}y,\xi-\eta\rangle\right)}\left(\frac{x_{i}-y_{i}}{h}\right)^{N_{1}}q_{\alpha,\pm}^{1/2}(x,\xi)\chi^{1}(h^{-\beta}y_{1})D^{N_{2}}_{\eta_{i}}q^{-1/2}_{\alpha,\pm}(y,\eta)dyd\xi$$
and so integration by parts in $\xi_{i}$ followed by the same argument gives
\begin{align*}&|D^{\gamma}_{\eta}c(x,\eta)|\leq{}C_{\gamma}h^{-\alpha|\gamma|},\\
&|D^{\gamma}_{\eta}c(x,\eta)|\leq{}C_{\gamma}h^{-\alpha|\gamma|}(1+h^{-\beta}|x|)^{-M}\quad\chi\text{ compactly supported}.\end{align*}
Therefore by the rescaling and almost orthogonality arguments of \cite{Zworski12} we can see that $c(x,hD)$ maps $L^{2}\to{}L^{2}$ with uniformly bounded norm.
 \end{proof}
  
 We now prove a concentration theorem for $h^{\alpha}$ thickened neighbourhoods of level sets. This theorem applies not only for the level set $x_{1}=0$ but  for level sets of all smooth symbols $q(x,\xi)$. 
 \begin{thm}\label{commutatorthm}
 Suppose $u,v\in{}L^{2}$.  Let $q(x,\xi)$ be a smooth symbol and define 
 $$\dot{q}(x,\xi)=\{p(x,\xi),q(x,\xi)\}.$$
 Then for $\alpha\leq{}\frac{1}{2}$
  \begin{multline}\left|\langle{}\chi_{\alpha,q}(x,hD)v,\dot{q}(x,hD)\chi_{\alpha,q}(x,hD)u\rangle\right|\lesssim{}h^{\alpha-1}(\norm{E[v]}_{L^{2}}\norm{u}_{L^{2}}+\norm{v}_{L^{2}}\norm{E[u]}_{L^{2}})\\
  +h^{1-\alpha}\norm{u}_{L^{2}}\norm{v}_{L^{2}}\quad{}i=1,2\label{innerthmest}\end{multline}
   and
\begin{equation}\norm{\dot{q}(x,hD)\chi^{i}_{\alpha,q}(x,hD)u}_{L^{2}}\lesssim{}h^{\frac{\alpha}{2}-\frac{1}{2}}(\norm{E[u]}_{L^{2}}\norm{u}_{L^{2}})^{\frac{1}{2}}+h^{\frac{1}{2}-\frac{\alpha}{2}}\norm{u}_{L^{2}}\quad{}i=1,2.\label{normthmest}\end{equation}
In particular, if both $u$ and $v$ are $O_{L^{2}}(h)$ quasimodes, then 
$$\left|\langle{}\chi_{\alpha,q}(x,hD)v,\dot{q}(x,hD)\chi_{\alpha,q}(x,hD)u\rangle\right|\lesssim{}h^{\alpha}\norm{u}_{L^{2}}\norm{v}_{L^{2}}\quad{}i=1,2$$
and
$$\norm{\dot{q}(x,hD)\chi^{i}_{\alpha,q}(x,hD)u}_{L^{2}}\lesssim{}h^{\frac{\alpha}{2}}\norm{u}_{L^{2}}\quad{}i=1,2.$$

 \end{thm}
 
 \begin{remark}
 Theorem \ref{commutatorthm} tells us that if $|\dot{q}(x,\xi)|>c>0$, 
 $$\norm{\chi^{i}_{\alpha,q}(x,hD)u}_{L^{2}}\lesssim{}h^{\alpha/2}\norm{u}_{L^{2}}\quad{}i=1,2.$$ 
 That is there cannot be concentration near $q(x,\xi)=K$. The notation $\dot{q}(x,\xi)$ for the Poisson bracket comes from the fact that this gives the classical evolution of $q(x(t),\xi(t))$.\end{remark}
 
 \begin{remark}
 If we set $q(x,\xi)=x$ and $K=0$ Theorem \ref{commutatorthm} tells us that
 $$\norm{\nu(x,hD)\chi^{i}(h^{-\alpha}x_{1})u}_{L^{2}}\lesssim{}h^{\alpha/2}\norm{u}_{L^{2}}\quad{}i=1,2.$$
 That is that $\nu(x,hD)u$ is small in a $h^{\alpha}$ thickened neighbourhood of the hypersurface $x_{1}=0$. 
 \end{remark}
 
 \begin{proof}
 Let $\zeta(r):\R\to{}\R$ be smooth and compactly supported. Set $\tlide{\zeta}(r)$ such that $\tilde{\zeta}'(r)=\zeta(r)$. Then denote
 $$\tilde{\zeta}_{\alpha,q}(x,\xi)=\tilde{\zeta}(h^{-\alpha}(q(x,\xi)-K)).$$
 Now consider the commutator $[p(x,hD),\tilde{\zeta}_{\alpha,q}(x,hD)]$.
 We know that the principal symbol of the commutator is given by the Poisson bracket. That is
 \begin{equation}[p(x,hD),\tilde{\zeta}_{\alpha,q}(x,hD)]=Op(c(x,\xi))\label{commutatorID}\end{equation}
 \begin{multline}c(x,\xi)=ih\{p(x,\xi),\tilde{\zeta}_{\alpha,q}(x,\xi)\}+O(h^{2(1-\alpha)})\\
 =ih^{1-\alpha}\zeta(h^{-\alpha}(q(x,\xi)-K))\{p(x,\xi),q(x,\xi)\}+O(h^{2(1-\alpha)})\\
 =ih^{1-\alpha}\dot{q}(x,\xi)\zeta(h^{-\alpha}(q(x,\xi)-K))+O(h^{2(1-\alpha)}).\label{commutatorsymb}\end{multline}
 Rearranging \eqref{commutatorID} in view of \eqref{commutatorsymb} we obtain
$$\dot{q}(x,hD)\zeta_{\alpha,q}(x,hD)u=\frac{h^{\alpha-1}}{i}[p(x,hD),\tilde{\zeta}_{\alpha,q}(x,hD)]u+O(h^{1-\alpha}\norm{u}_{L^{2}}).$$
Now consider the inner product
 \begin{equation}
 \begin{aligned}
 |\langle{}v,\dot{q}(x,hD)&\zeta_{\alpha,q}(x,hD)u\rangle|\\
& \leq{}h^{\alpha-1}\left|\langle{}v,[p(x,hD),\tilde{\zeta}_{\alpha,q}(x,hD)]u\rangle\right|+O(h^{1-\alpha})\norm{u}_{L^{2}}\norm{v}_{L^{2}}\\
 &\leq{}h^{\alpha-1}\left(\left|\langle{}p^{\star}(x,hD)v,\tilde{\zeta}_{\alpha,q}(x,hD)u\rangle\right|+\left|\langle{}v,\tilde{\zeta}_{\alpha,q}(x,hD)p(x,hD)u\rangle\right|\right)\\
 &\quad\quad\quad+O(h^{1-\alpha})\norm{u}_{L^{2}}\norm{v}_{L^{2}} \\
&  \lesssim{}h^{\alpha-1}\left(\norm{E[v]}_{L^{2}}\norm{u}_{L^{2}}+\norm{v}_{L^{2}}\norm{E[u]}_{L^{2}}\right)+h^{1-\alpha}\norm{v}_{L^{2}}\norm{u}_{L^{2}}.\label{innerprodest}\end{aligned}\end{equation}
If we set $\zeta(r)=(\chi^{i})^{2}(r)$ for $i=1,2$ we obtain \eqref{innerthmest}. Finally by setting $\zeta(r)=(\chi^{i})^{2}(r)$ and $v=\dot{q}(x,hD)u$ we obtain
\begin{multline*}|\langle\dot{q}(x,hD)\chi_{\alpha,q}(x,hD)u,\dot{q}(x,hD)\chi_{\alpha,q}(x,hD)u\rangle|\\
=\left|\langle{}\dot{q}(x,hD)u,\dot{q}(x,hD)\zeta_{\alpha,q}(x,hD)u\rangle\right|+O(h^{1-\alpha}\norm{u}_{L^2}^{2})\\
\lesssim{}h^{1-\alpha}\norm{E[u]}_{L^{2}}\norm{u}_{L^{2}}+O(h^{1-\alpha}\norm{u}_{L^{2}}^{2})\end{multline*}
which yields \eqref{normthmest}.
\end{proof}

While not enough to obtain restriction estimates directly, Theorem \ref{commutatorthm} will be very useful to us. We begin by looking at some immediate corollaries. 

Note that the inner product version of Theorem \ref{commutatorthm} is stronger that the norm version. We can in fact easily show that  $\dot{q}_{\alpha,\pm}^{1/2}(x,hD)$ obeys
$$\norm{\dot{q}^{1/2}(x,hD)u}_{L^{2}}\lesssim{}h^{\frac{\alpha}{2}}\norm{u}_{L^{2}}$$
where $u$ is an $O_{L^{2}}(h)$ quasimode of $p(x,hD)$. 

\begin{cor}\label{cor:sqrt}
Let $q(x,\xi),p(x,\xi)$ and $\dot{q}(x,\xi)$ be as in Theorem \ref{commutatorthm} and suppose $u\in{}L^{2}$. Then for $\alpha<1/2$
$$\norm{\dot{q}_{\alpha,\pm}^{1/2}(x,hD)\chi^{i}_{\alpha,q}(x,hD)u}_{L^{2}}\lesssim{}h^{\frac{\alpha}{2}-\frac{1}{2}}(\norm{E[u]}_{L^{2}}\norm{u}_{L^{2}})^{\frac{1}{2}}+h^{\frac{\alpha}{2}}\norm{u}_{L^{2}}\quad{}i=1,2.$$
In particular if $u$ is an $O_{L^{2}}(h)$ quasimode of $p(x,hD)$ then
$$\norm{\dot{q}_{\alpha,\pm}^{1/2}(x,hD)\chi^{i}_{\alpha,q}(x,hD)u}_{L^{2}}\lesssim{}h^{\frac{\alpha}{2}}\norm{u}_{L^{2}}\quad{}i=1,2.$$
\end{cor}

\begin{proof}
We write
\begin{align*}
\langle{}\dot{q}_{\alpha,+}^{1/2}(x,hD)\chi^{i}_{\alpha,q}(x,hD)u,&\dot{q}_{\alpha,+}^{1/2}(x,hD)\chi^{i}_{\alpha,q}(x,hD)u\rangle\\
&=\langle\chi^{i}_{\alpha,q}(x,hD)u,\dot{q}_{\alpha,+}^{1/2}(x,hD)\circ\dot{q}_{\alpha,+}^{1/2}(x,hD)\chi^{i}_{\alpha,q}(x,hD)u\rangle\\
&+O(h^{1-\alpha}\norm{u}_{L^{2}}^{2}).\end{align*}
Now applying \eqref{semiexp} and the bound from Lemma \ref{lem:invert} we have
$$\dot{q}_{\alpha,+}^{1/2}(x,hD)\circ\dot{q}_{\alpha,+}^{1/2}(x,hD)=Op(c(x,\xi))$$
where
$$c(x,\xi)=(\dot{q}_{\alpha,+}^{1/2}(x,\xi))^{2}+hr(x,\xi)$$
$$|D^{\gamma}r(x,\xi)|\leq{}C_{\gamma}(h^{\alpha}+|\dot{q}(x,\xi)|)^{-1+|\gamma|}.$$
Note that $(\dot{q}_{\alpha,+}^{1/2}(x,\xi))^{2}$ differs from $\dot{q}(x,\xi)$ only in the region where $|\dot{q}(x,\xi)|\leq{}2h^{\alpha}$ so
\begin{align*}\langle\chi^{i}_{\alpha,q}(x,hD)&u,\dot{q}_{\alpha,+}^{1/2}(x,hD)\circ\dot{q}_{\alpha,+}^{1/2}(x,hD)\chi^{i}_{\alpha,q}(x,hD)u\rangle\\
&\langle{}\langle\chi^{i}_{\alpha,q}(x,hD)u,\dot{q}(x,hD)\chi^{i}_{\alpha,q}(x,hD)u\rangle+h^{\alpha}\norm{u}_{L^{2}}^{2}.\end{align*}
Now applying Theorem \ref{commutatorthm} we obtain
\begin{align*}\Big|\langle{}\dot{q}_{\alpha,+}^{1/2}(x,hD)\chi^{i}_{\alpha,q}(x,hD)u,\dot{q}_{\alpha,+}&^{1/2}(x,hD)\chi^{i}_{\alpha,q}(x,hD)u\rangle\Big|\\
&\lesssim{}h^{\alpha-1}\norm{E[u]}_{L^{2}}\norm{u}_{L^{2}}+h^{\alpha}\norm{u}_{L^{2}}^{2}\end{align*}
which completes the proof. The proof for $\dot{q}_{\alpha,-}^{1/2}(x,hD)$ is the same, so we omit it. 
\end{proof}

Another interesting consequence of Theorem \ref{commutatorthm} is that applying these kind of cut off functions do not damage the quasimode order  as much as may be first sumised. Indeed if $\zeta(x,hD)$ is a semiclassical psuedodifferential operator that localised $u$ at in $|\nu(x,\xi)|$ at scale $h^{\alpha}$ then the symbol of $\zeta(x,hD)$ must be in $S^{\alpha}$. Then
$$p(x,hD)\zeta(x,hD)u=\zeta(x,hD)p(x,hD)u+O_{L^{2}}(h^{1-\alpha}\norm{u}_{L^{2}}).$$
So if $\alpha\gg{}0$ placing cut offs on quasimodes appears to damage their quasimode error quite significantly. However Theorem \ref{commutatorthm} allows us to somewhat correct the error term. 

\begin{cor}\label{cuttofferror}
Suppose $\alpha\leq{}\frac{1}{2}$ and $\chi^{i}_{\alpha,q}(x,hD)$ is as in Theorem \ref{commutatorthm}. Then if $u$ and $v$ are $O_{L^{2}}(h)$ quasimodes of $p(x,hD)$,
\begin{equation}\left|\langle{}\chi^{i}_{\alpha,q}(x,hD)v,E[\chi_{\alpha,q}(x,hD)u]\rangle\right|\lesssim{}h\norm{v}_{L^{2}}\norm{u}_{L^{2}}\quad{}i=1,2,3\label{inner-err}\end{equation}
and
\begin{equation}\norm{E[\chi_{\alpha,q}(x,hD)u}_{L^{2}}\lesssim{}h^{1-\frac{\alpha}{2}}\norm{u}_{L^{2}}\quad{}i=1,2,3.\label{norm-err}\end{equation}
\end{cor}

\begin{proof}
We know that the principal symbol of the commutator is given by the Poisson bracket so (in the notation of Theorem \ref{commutatorthm})
\begin{multline}p(x,hD)\chi^{i}_{\alpha,q}(x,hD)u=\chi^{i}_{\alpha,q}(x,hD)p(x,hD)u\\+h^{1-\alpha}(\chi^{i})'_{\alpha,q}(x,hD)\dot{q}(x,hD)u
+O_{L^{2}}(h^{2(1-\alpha)}\norm{u}_{L^{2}}).\label{applyp}\end{multline}
The first and the third terms are already $O_{L^{2}}(h)$ so we need only to treat the middle term. That is we need to estimate
$$h^{1-\alpha}\langle{}\chi^{i}_{\alpha,q}v,(\chi^{i})'_{\alpha,q}(x,hD)\dot{q}(x,hD)u\rangle=h^{1-\alpha}\langle{}v,\chi^{i}_{\alpha,q}(\chi^{i})'_{\alpha,q}\dot{q}(x,hD)u\rangle+O(h\norm{u}_{L^{2}}\norm{v}_{L^{2}}).$$
We apply the proof of Theorem \ref{commutatorthm} (in particular the inequality \eqref{innerprodest}) with $\zeta(r)=\chi^{i}(r)(\chi^{i})'(r)$ to obtain
$$\left|\langle{}\chi^{i}_{\alpha,q}v,E[\chi_{\alpha,q}(x,hD)u]\rangle\right|\lesssim{}h\norm{v}_{L^{2}}\norm{u}_{L^{2}}.$$
To get \eqref{norm-err} we again only have to treat the middle term of \eqref{applyp}. Theorem \ref{commutatorthm} tells us that
$$\norm{\dot{q}(x,hD)\chi_{\alpha,\nu}(x,hD)u}_{L^{2}}\lesssim{}h^{\alpha/2}\norm{u}_{L^{2}}$$
which immediately implies \eqref{norm-err}.
\end{proof}

\section{Hypersurface concentration bounds}\label{sec:hypersurface}

We now address the more difficult question of hypersurface $L^{2}$ bounds. In this section we specialise to $q(x,\xi)=x_{1}$ and $K=0$. Then
$$\dot{q}(x,\xi)=\{p(x,\xi),x_{1}\}=\partial_{\xi_{1}}p(x,\xi)=\nu(x,\xi).$$ 
In what follows we adopt the convention that $\langle\cdot,\cdot\rangle_{x'}$ is the inner product on the hypersurface $x_{1}=0$. 

We will prove Theorem \ref{semiclassicalthm} by splitting the analysis into two parts. the tangential component, localised where $|\nu(x,\xi)|\leq{}h^{1/3}$, and the non-tangential component, where $|\nu(x,\xi)|\geq{}h^{1/3}$. Indeed for the non-tangential contribution we in fact prove the stronger statement that
 $$\norm{\nu(x,hD)\chi^{2}_{\alpha,\nu}(x,hD)u}_{L^{2}(H)}\lesssim{}h^{\alpha/2}\norm{u}_{L^{2}(M)}.$$
 In the tangential case we are able to prove the strong version where $p(x,hD)$ is sufficiently Laplace-like with respect to the hypersurface (see Definition \ref{laplacelike}). For general $p(x,hD)$ we are still however able to obtain the weaker statement
 $$\norm{\nu(x,hD)\chi^{1}_{\alpha,\nu}(x,hD)v}_{L^{2}(H)}\lesssim{}\norm{u}_{L^{2}(M)}$$
 which is enough to obtain Theorem \ref{semiclassicalthm}.

We produce an operator $W$ which has the effect of changing variables, but fixing the hypersurface, so that the pseudodifferential operator $p(x,hD)$ becomes the simple, constant coefficient differential operator $hD_{x_{1}}$.  
 
 \begin{prop}\label{prop:semiprop}
 There exists an operator $W:L^{2}(\R^{n})\to{}[0,\epsilon]\times{}L^{2}(\R^{n-1})$ such that
 $$hD_{x_{1}}\circ{}W=W\circ{}p(x,hD)+O(h^{\infty})$$
 and
 $$Wu\Big|_{H}=u\Big|_{H}.$$
 Further, $W$ is given by a  semiclassical Fourier integral operator
\begin{equation}Wu=\frac{1}{(2\pi{}h)^{n}}\iint{}e^{\frac{i}{h}(\langle{}x',\xi'\rangle+\phi(x_{1},y,\xi))}b(x_{1},y,\xi)u(y)d\xi{}dy\label{semipara}\end{equation}
with
 $$\begin{cases}\partial_{x_{1}}\phi+p(y,\nabla_{y}\phi)=0\\
 \phi(0,y,\xi)=-\langle{}y,\xi\rangle\\
b(0,y,\xi)=1.\end{cases}$$
 \end{prop}
 
 \begin{proof}
 This is just an adaption of a standard semiclassical parametrix (see for example \cite{Zworski12}).  
 If $W$ is given by \eqref{semipara} then 
 $$hD_{x_{1}}\circ{}Wu=\frac{1}{(2\pi{}h)^{n}}\iint{}e^{\frac{i}{h}(\langle{}x',\xi'\rangle+\phi(x_{1},y,\xi))}\left(\partial_{x_{1}}\phi(x_{1},y,\xi)b(x_{1},y,\xi)+hD_{x_{1}}b(x_{1},y,\xi)\right)u(y)d\xi{}dy.$$
 On the other hand 
 $$W\circ{}p(x,hD)u=\frac{1}{(2\pi{}h)^{2n}}\iint{}e^{\frac{i}{h}(\langle{}x',\xi'\rangle+\phi(x_{1},z,\xi)+\langle{}z-y,\eta\rangle)}b(x_{1},z,\xi)p(z,\eta)u(y)dyd\eta{}d\xi{}dz.$$
 We calculate the $(z,\eta)$ integral via the method of stationary phase. The phase is stationary when
 $$y=z\quad{}\nabla_{y}\phi(x_{1},y,\xi)=\eta$$
 and the critical point is clearly non-degenerate. So
$$W\circ{}p(x,hD)u=\frac{1}{(2\pi{}h)^{n}}\iint{}e^{\frac{i}{h}(\langle{}x',\xi'\rangle+\phi(x_{1},y,\xi))}\left(b(x_{1},y,\xi)p(x,\nabla_{y}\phi)+hr_{1}(x_{1},y,\xi)\right)u(y)d\xi{}dy.$$
Clearly if $\phi$ satisfies
  $$\partial_{x_{1}}\phi(x_{1},y,\xi)+p(y,\nabla_{y}\phi)=0$$
  we remove the highest order term. Note that this is just a Hamilton-Jacobi equation. We may then solve away lower terms in the standard fashion by expressing $b$ as a series. That is
$$b(x_{1},y,\xi)\sim\sum_{k}h^{k}b_{k}(x_{1},y,\xi)$$ 
with
$$\begin{cases}
b_{0}(0,y,\xi)=1\\
b_{k}(0,y,\xi)=0&k\geq{}1.\end{cases}$$
Finally we check the hypersurface condition. When $x_{1}=0$ we have
$$Wu=\frac{1}{(2\pi{}h)^{n}}\iint{}e^{\frac{i}{h}(\langle{}x'-y',\xi'\rangle-y_{1}\xi_{1})}u(y)d\xi{}dy=u\Big|_{H}$$
so
$$Wu\Big|_{H}=u\Big|_{H}.$$ 
 \end{proof}
 
 We now use this variable change to prove bounds on the inner product $\langle{}\chi^{i}_{\alpha,\nu}(x,hD)v,  \chi^{i}_{\alpha,\nu}(x,hD)u\rangle_{x'}$.
 
 \begin{prop}\label{prop:nontang}
Suppose $u,v\in{}L^{2}$, $i=2,3$ and $\alpha\leq{}1/3$ then
\begin{equation}
\begin{aligned}
\Big|\langle{}\chi^{i}_{\alpha,\nu}&(x,hD)v,  \chi^{i}_{\alpha,\nu}(x,hD)u\rangle_{x'}\Big|\\
&\lesssim{}h^{-\alpha}\norm{\nu^{-1/2}_{\alpha,\pm}(x,hD){\chi}^{i}_{\alpha,\nu}(x,hD)u}_{L^{2}(M)}\norm{\nu^{-1/2}_{\alpha,\pm}(x,hD){\chi}^{i}_{\alpha,\nu}(x,hD)v}_{L^{2}(M)}\\
&+h^{-1}\Big(\norm{\nu_{\alpha,+}^{-1/2}(x,hD)E[{\chi}^{i}_{\alpha,\nu}(x,hD)u]}_{L^{2}(M)}\norm{\nu^{-1/2}_{\alpha,+}(x,hD){\chi}^{i}_{\alpha,\nu}(x,hD)v}_{L^{2}(M)}\\
&+\norm{\nu^{-1/2}_{\alpha,\pm}(x,hD){\chi}^{i}_{\alpha,\nu}(x,hD)u}_{L^{2}(M)}\norm{\nu^{-1/2}_{\alpha,\pm}(x,hD)E[{\chi}^{i}_{\alpha,\nu}(x,hD)v]}_{L^{2}(M)}\Big)\label{hypersurfaceinner}\end{aligned}\end{equation}
where $\nu^{-1/2}_{\alpha,\pm}(x,hD)$ is the inverse of $\nu^{1/2}_{\alpha,\pm}(x,hD)$.
 \end{prop}
 
 \begin{proof}

Let $\theta(x_{1})$ be the Heaviside function
$$\theta(x_{1})=\begin{cases}
\frac{1}{2}&x_{1}>0\\
-\frac{1}{2}& x_{1}<0.\end{cases}$$
Then
$$hD_{x_{1}}\theta(x_{1})\chi^{1}(h^{-\alpha}x_{1})W=\frac{h}{i}\delta(x_{1})W+\theta(x_{1})\chi^{1}(h^{-\alpha}x_{1})hD_{x_{1}}W+\frac{h^{1-\alpha}}{i}\theta(x_{1})(\chi^{1})'(h^{-\alpha}x_{1})W$$
and rearranging  we obtain
$$\delta(x_{1})W=ih^{-1}\left(hD_{x_{1}}\theta(x_{1})\chi^{1}(h^{-\alpha}x_{1})W-\theta(x_{1})\chi^{1}(h^{-\alpha}x_1)hD_{x_{1}}W\right)+h^{-\alpha}\theta(x_{1})(\chi^{1})'(h^{-\alpha}x_{1})W$$
so if $f$ and $g$ are in $L^{2}$,
\begin{align*}\langle{}f,g\rangle_{x'}&=\langle{}Wf,Wg\rangle_{x'}\\
&=-ih^{-1}\Big(\langle{}hD_{x_{1}}Wf,\theta(x_{1})\chi^{1}(h^{-\alpha}x_{1})Wg-\langle{}Wf,\theta(x_{1})\chi^{1}(h^{-\alpha}x_{1})hD_{x_{1}}Wg\rangle\Big)\\
&\quad\quad\quad+h^{-\alpha}\langle{}Wf,\theta(x_{1})(\chi^{1})'(h^{-\alpha}x_{1})Wg\rangle\\
&=-ih^{-1}\Big(\langle{}WE[f],\theta(x_{1})\chi^{1}(h^{-\alpha}x_{1})Wg-\langle{}Wf,\theta(x_{1})\chi^{1}(h^{-\alpha}x_{1})WE[g]\rangle\Big)\\
&\quad\quad\quad+h^{-\alpha}\langle{}Wf,\theta(x_{1})(\chi^{1})'(h^{-\alpha}x_{1})Wg\rangle.\end{align*}
We set
$$f=\chi^{i}_{\alpha,\nu}(x,hD)v\quad{}g=\chi^{i}_{\alpha,\nu}(x,hD)u$$
and note that since $\nu^{1/2}_{\alpha,\pm}(x,hD)$ is invertible if 
\begin{equation}\widetilde{W}_{\alpha,\pm}=W\circ{}\nu^{1/2}_{\alpha,\pm}(x,hD)\label{Wtilde:def}\end{equation}
then
\begin{align*}
\langle{}\chi^{i}&_{\alpha,\nu}(x,hD)v,\chi^{i}_{\alpha,\nu}(x,hD)u\rangle_{x'}\\
&=h^{-1}\Big(\langle{}\widetilde{W}_{\alpha}\nu^{-1/2}_{\alpha,\pm}(x,hD)E[\chi^{i}_{\alpha,\nu}(x,hD)v],\theta(x_{1})\chi^{1}(h^{-\alpha}x_{1})\widetilde{W}_{\alpha}\nu^{-1/2}_{\alpha,\pm}(x,hD)\chi^{i}_{\alpha,\nu}(x,hD)u\rangle\\
&+\langle{}\widetilde{W}_{\alpha}\nu^{-1/2}_{\alpha,\pm}(x,hD)\chi^{i}_{\alpha,\nu}(x,hD)v,\theta(x_{1})\chi^{1}(h^{-\alpha}x_{1})\widetilde{W}_{\alpha}\nu^{-1/2}_{\alpha,\pm}(x,hD)E[\chi^{i}_{\alpha,\nu}(x,hD)u]\rangle\Big)\\
&+h^{-\alpha}\langle{}\widetilde{W}_{\alpha}\nu^{-1/2}_{\alpha,\pm}(x,hD)f,\theta(x_{1})(\chi^{1})'(h^{-\alpha}x_{1})\widetilde{W}_{\alpha}\nu^{-1/2}_{\alpha,\pm}g\rangle\end{align*}
Therefore if we can show that for $\alpha\leq{}1/3$
\begin{equation}\norm{\widetilde{W}_{\alpha,\pm}}_{L^{2}(M)\to{}L^{2}(M)}\lesssim{}1\label{Wtildeest}\end{equation}
we obtain \eqref{hypersurfaceinner} as required.

We have
$$W\circ{}\nu^{1/2}_{\alpha,\pm}(x,hD)u=\frac{1}{(2\pi{}h)^{n}}\iint{}e^{\frac{i}{h}(\langle{}x',\xi'\rangle+\phi(x_{1},y,\xi))}\left[\nu^{1/2}_{\alpha,\pm}(y,\nabla_{y}\phi)b(x_{1},y,\xi)+h^{1-2\alpha}r(x,\xi)\right]u(y)d\xi{}dy.$$
Note that since we are in fact estimating $\chi^{1}(h^{-\alpha}x_{1})\widetilde{W}$ we may assume that $|x_{1}|\leq{}\epsilon{}h$. Since we will operate only on functions localised on a $h^{\alpha}$ scale away from $\nu(x,\xi)=0$ we may assume the symbol is also localised on such a (possibly a little larger) region. Further since $\nu^{1/2}_{\alpha,\pm}(x,\xi)>h^{\alpha/2}$ and since $|\alpha|\leq{}1/3$ the error term
$|r(x,\xi)|\leq{}h^{1/3}$ so we can write
$$W\circ{}\nu^{1/2}_{\alpha,+}(x,hD)u=\frac{1}{(2\pi{}h)^{n}}\iint{}e^{\frac{i}{h}(\langle{}x',\xi'\rangle+\phi(x_{1},y,\xi))}\nu^{1/2}_{\alpha,\pm}(y,\nabla_{y}\phi)\tilde{b}(x_{1},y,\xi)u(y)d\xi{}dy$$
where $\tilde{b}(x_{1},y,\xi)\in{}S^{\alpha}$.

We will calculate $U_{\alpha,\pm}=\widetilde{W}_{\alpha,\pm}(\widetilde{W}_{\alpha,\pm})^\star$ and show that it has the form
\begin{equation}U_{\alpha,\pm}u=\frac{1}{(2\pi{}h)^{n}}\iint{}e^{\frac{i}{h}\langle{}x-y,\xi\rangle}b(x,y,\xi)u(y)dyd\xi+O(h^{\infty})\label{WWstar}\end{equation}
with 
$$|\partial_{x_{i}}^{N}b|\cdot|\partial^{N}_{\xi_{i}}b|\leq{}C_{N}h^{-N}.$$ Therefore  from standard results about the $L^{2}\to{}L^{2}$ mapping properties of pseudodifferential operators (see for example \cite{Zworski12})
we obtain
$$\norm{\widetilde{W}_{\alpha,\pm}}_{L^{2}(M)\to{}L^{2}(M)}^{2}=\norm{U_{\alpha,\pm}}_{L^{2}(M)\to{}L^{2}(M)}\lesssim{}1.$$
During our calculations it will often be useful to recall that $\phi$ can be written as
$$\phi(x_{1},y,\xi)=\tlide{\phi}(x_{1},y,\xi)-\langle{}y,\xi\rangle.$$
Now
$$U_{\alpha,\pm} u=\frac{1}{(2\pi{}h)^{2n}}\iint{}e^{\frac{i}{h}(\langle{}x',\xi'\rangle+\phi(x_{1},z,\xi)-\langle{}y',\eta'\rangle-\phi(y_{1},z,\eta))}D(x,y,z,\xi,\eta)u(y)dydzd\xi d\eta$$
where
$$D(x,y,z,\xi,\eta)=\nu^{1/2}_{\alpha,\pm}(z,\nabla_{z}\phi(x_{1},z,\xi))\nu^{1/2}_{\alpha,\pm}(z,\nabla_{z}\phi(y_{1},z,\eta))b(x,y,z,\xi,\eta).$$
We first calculate the $(z,\eta)$ integral using stationary phase. The critical point equations are
\begin{equation}\begin{cases}
\nabla_{z}\phi(x_{1},z,\xi)=\nabla_{z}\phi(y_{1},z,\eta)\\
-y'+\nabla_{\eta'}\phi(y_{1},z,\eta)=0\\
\partial_{\eta_{1}}\phi(y_{1},z,\eta)=0.\end{cases}\label{critpointeq}\end{equation}
Since 
$$\phi(x_{1},y,\xi)=-\langle{}y,\xi\rangle+O(|x_{1}|)$$ this is a non-degenerate critical point. We first make some observations about the critical point
\begin{enumerate}
\item When $x=y$, we require $\xi=\eta$ to satisfy the critical point equations \eqref{critpointeq}.
\item We have $\nabla_{\eta}\phi(y_{1},z,\eta)=z+O(|y_{1}|)$, $\nabla_{\xi}\phi(x_{1},z,\xi)=z+O(|x_{1}|)$ and we may assume $|x_{1}|,|y_{1}|<\epsilon{}h^{\alpha}$. Therefore, if $|x-y|\gg\epsilon{}h^{\alpha}$, we may integrate by parts in either $\xi$ or $\eta$ to get a $h^{\infty}$ error. So we may assume $|x-y|<\epsilon{}h^{\alpha}$.
\item If the critical points are given by $(z(x,y,\xi),\eta(x,y,\xi))$ we have $z(x,y,\xi)=y+O(|y_{1}|+|x_{1}|)$ and $\eta(x,y,\xi)=\xi+O(|y_{1}|+|x_{1}|)$.
\end{enumerate}
We write
$$U_{\alpha,\pm} u=\frac{1}{(2\pi{}h)^{n}}\iint{}e^{\frac{i}{h}\psi(x,,y,\xi)}c(x,y,\xi)u(y)dyd\xi$$
where
\begin{multline}\psi(x,y,\xi)=\langle{}x',\xi'\rangle+\phi(x_{1},z(x,y,\xi),\xi)\\
-\langle{}y',\eta'(x,y,\xi)\rangle-\phi(y_{1},z(x,y,\xi),\eta(x,y,\xi))\label{psialphadef}\end{multline}
and
\begin{equation}c(x,y,\xi)=\nu^{1/2}_{\alpha,\pm}(z(x,y,\xi),\nabla_{z}\phi(x_{1},z(x,y,\xi),\xi))\nu^{1/2}_{\alpha,\pm}(z(x,y,\xi),\nabla_{z}\phi(y_{1},z(x,y,\xi),\eta(x,y,\xi))b(x,y,\xi).\label{calphadef}\end{equation}
Now since $\eta(x,y,\xi)=\xi+O(|x_{1}|+|y_1|)$ and $z=(0,y')+O(|x_{1}|+|y_{1}|)$ we may write
$$c(x,y,\xi)=\nu(0,y',\xi)\tilde{b}(x,y,\xi).$$
Since we know that $\eta(x,x,\xi)=\xi$ we can see from \eqref{psialphadef} that $\psi(x,x,\xi)=0$. So we may write
$$\psi(x,y,\xi)=(x-y)\cdot{}G(x,y,\xi).$$
We make a change first in the dashed variables,
$$\bar{\xi}'=G'(x,y,\xi).$$
To calculate the Jacobian note that since $|x-y|\leq{}\epsilon{}h^{\alpha}$
$$\frac{\partial^{2}\psi}{\partial{}x_{i}\partial_{\xi'_{j}}}=\frac{\partial{}G'_{i}}{\partial\xi_{j}}+O(\epsilon{}h^{\alpha}).$$
So we calculate the mixed derivatives $\partial^{2}_{x,\xi}\psi$. To do this we write $\psi(x,y,\xi)$ as
\begin{align*}\psi(x,y,\xi)&=\langle{}x'-y',\xi'\rangle+\tilde{\phi}(x_{1},z(x,y,\xi),\xi)-\tlide{\phi}(y_{1},z(x,y,\xi),\eta(x,y,\xi))\\
&+\langle{}z(x,y,\xi),\eta(x,y,\xi)-\xi\rangle-\langle{}y',\eta'(x,y,\xi)-\xi'\rangle.\end{align*}
So
$$\psi(x,y,\xi)=\langle{}x'-y',\xi'\rangle+x_{1}p(0,y',\xi)-y_{1}p(0,y',\xi)+O((|x_{1}+|y_{1}|)^{2}).$$
Then
$$\frac{\partial\psi}{\partial_{x_{i}}\partial_{\xi'_{j}}}=\delta_{ij}+O(h^{\alpha})$$
and the Jacobian matrix is given by $\Id+O(h^{\alpha})$. Therefore
$$U_{\alpha,\pm}=\frac{1}{(2\pi{}h)^{n}}\iint{}e^{\frac{i}{h}((x_1-y_1){}G_{1}(x,y,\xi_{1},\xi'(x,y,\xi_{1},\bar{\xi}'))+\langle{}x'-y',\bar{\xi}'\rangle}c(x,y,\xi_{1},\bar{\xi}')u(y)dyd\xi$$
with
$$c(x,y,\xi_{1},\bar{\xi}')=\nu(0,y',\xi_{1},\xi'(x,y,\xi_{1},\bar{\xi}'))b(x,y,\xi_{1},\xi'(x,y,\xi_{1},\bar{\xi}')).$$
Note that since the Jacobian of the transformation is bounded below, $c(x,y,\xi_{1},\bar{\xi}')$ inherits the regularity properties of $c(x,y,\xi)$, that is it is in $S^{\alpha}$. Finally we perform a change of variables in the $\xi_{1}$ coordinates.
$$h^{\alpha}\bar{\xi}_{1}=G_{1}(x,y,\xi_{1},\xi'(x,y,\xi_{1},\bar{\xi}')).$$
As in the case of the dashed coordinates we have that $|x-y|\leq{}\epsilon{}h^{\alpha}$ implies
$$\frac{\partial^{2}\psi}{\partial{}x_{i}\partial_{\xi'_{j}}}=\frac{\partial{}G'_{i}}{\partial\xi_{j}}+O(\epsilon{}h^{\alpha}).$$
So we need to calculate $\frac{\partial^{2}\psi}{\partial{}x_{1}\partial\xi_{1}}$.We rewrite $\psi$ as
$$\psi(x,y,\xi)=x_{1}p(0,y',\xi_{1},\xi'(x,y,\xi_{1},\bar{\xi}'))-y_{1}p(0,y',\xi_{1},\xi'(x,y,\xi_{1},\bar{\xi}'))+O(|x'-y'|)+O((|x_{1}|+|y_{1}|)^{2})$$
and using the fact that $|x-y|\leq{}\epsilon{}h^{\alpha}$ obtain 
$$\frac{\partial\psi}{\partial{}x_{1}\partial\xi_{1}}=\nu(0,y',\xi_{1},\xi'(x,y,\xi_{1},\bar{\xi}'))+O(\epsilon{}h^{\alpha}).$$
So since $\nu$ is localised such that $|\nu(0,y',\xi_{1},\xi'(x,y,\xi_{1},\bar{\xi}'))|>h^{\alpha}$ we have the bound
$$\left|\frac{\partial\bar{\xi}_{1}}{\partial\xi_{1}}\right|>h^{-\alpha}|\nu(0,y',\xi_{1},\xi'(x,y,\xi_{1},\bar{\xi}'))|.$$
Therefore we perform the change of variables and cancel the factor of $\nu$ with that in the symbol to obtain 
$$U_{\alpha,\pm}=\frac{h^{\alpha}}{(2\pi{}h)^{n}}\iint{}e^{\frac{i}{h}(h^{\alpha}(x_{1}-y_{1})\xi_{1}+\langle{}x'-y',\bar{\xi'})\rangle}c(x,y,\bar{\xi})u(y)dyd\xi$$
with
$$c(x,y,\bar{\xi})=c(x,y,\xi_{1}(x,y,\bar{\xi}),\bar{\xi}').$$
Again since $|\frac{\partial\bar{\xi}_{1}}{\partial\xi_{1}}|$ is bounded below $c(x,y,\bar{\xi})$ remains in $S^{\alpha}$. Finally we perform a scaling $h^{\alpha}\bar{\xi}_{1}\to\bar{\xi_{1}}$. This scaling makes the regularity in $\bar{\xi}_{1}$ a bit worse, we have the estimates
\begin{equation}\left|D^{\gamma}_{x,y,\bar{\xi}'}c(x,y,\bar{\xi})\right|\leq{}C_{\gamma}h^{-\alpha|\gamma|}\label{otherderiv}\end{equation}
and
\begin{equation}\left|\frac{\partial^{N}}{\partial^{N}\bar{\xi}_{1}}c(x,y,\bar{\xi})\right|\leq{}C_{N}h^{-2\alpha{}N}.\label{xibarderiv}\end{equation}
Since $\alpha\leq{}1/3$ \eqref{otherderiv} and \eqref{xibarderiv} together imply
$$|\partial_{x_{i}}^{N}c|\cdot{}|\partial_{\xi_{i}}^{N}c|\leq{}C_{N}h^{-N}$$
and therefore have obtained \eqref{WWstar}. So
$$\norm{U_{\alpha,\pm}}_{L^{2}(M)\to{}L^{2}(M)}\lesssim{}1$$
as desired.

 \end{proof}

 We may now prove the strong non-tangential result
 
 \begin{thm}\label{thm:nontang}
 Let $u\in{}L^{2}$ be a $O_{L^{2}}(h)$ quasimode of $p(x,hD)$ and $\alpha\leq{}\frac{1}{3}$.  

Then
 \begin{equation}\norm{\nu(x,hD)\chi^{2}_{\alpha,\nu}(x,hD)u}_{L^{2}(H)}\lesssim{}h^{\frac{\alpha}{2}}\norm{u}_{L^{2}(M)}\label{inner-norm}\end{equation}
 and
\begin{equation}\norm{\nu^{1/2}_{\alpha,\pm}(x,hD)\chi^{2}_{\alpha,\nu}(x,hD)u}_{L^{2}(H)}\lesssim{}\norm{u}_{L^{2}(M)}.\label{sqrt-norm}\end{equation}

 \end{thm}
 
  \begin{proof}
We first cut $\nu(x,hD)\chi^{2}_{\alpha,\nu}(x,hD)u$ off to a region of distance $h^{2\alpha}$ from the hypersurface. We then apply Proposition \ref{prop:nontang} to this function.
In Lemma \ref{lem:nearhypersurface} we will show that
\begin{equation}\norm{\nu^{1/2}_{\alpha,\pm}(x,hD)\chi^{2}_{\alpha,\nu}(x,hD)u}_{L^{2}([-h^{2\alpha},h^{2\alpha}]\times{}\R^{n-1})}\lesssim{}h^{\alpha}\norm{u}_{L^{2}(M)}\label{nuhalfest}\end{equation}
and,
\begin{equation} \norm{\chi^{2}_{\alpha,\nu}(x,hD)u}_{L^{2}([-h^{2\alpha},h^{2\alpha}]\times{}\R^{n-1})}\lesssim{}h^{\frac{\alpha}{2}}\norm{u}_{L^{2}(M)}.\label{chiest}\end{equation}
In fact these estimates hold for a $h^{2\alpha}$ thickened neighbourhood of any hypersurface $x_1=K$. For the moment we will take \eqref{nuhalfest} and \eqref{chiest} as given. Let
$$v=\chi^{1}(h^{-2\alpha}x_{1})\nu(x,hD)\chi^{2}_{\alpha,\nu}(x,hD)u.$$
Using Proposition \ref{prop:nontang} it would be enough to obtain
$$\norm{\nu_{\alpha,\pm}^{-1/2}(x,hD)\chi^{2}_{\alpha,\nu}(x,hD)v}_{L^{2}(M)}\lesssim{}h^{\alpha}\norm{u}_{L^{2}(M)}$$
and
$$\norm{\nu_{\alpha,\pm}^{-1/2}(x,hD)E[\chi^{2}_{\alpha,\nu}(x,hD)v]}_{L^{2}(M)}\lesssim{}h\norm{u}_{L^{2}(M)}.$$
Now
\begin{align*}
\nu_{\alpha,\pm}^{-1/2}(x,hD)&\chi^{2}_{\alpha,\nu}(x,hD)\chi^{1}(h^{-2\alpha}x_{1})\nu(x,hD)\chi^{2}_{\alpha,\nu}(x,hD)u\\
&=\chi^{2}_{\alpha,\nu}(x,hD)\left(\nu^{-1/2}_{\alpha,\pm}(x,hD)\chi^{1}(h^{-2\alpha}x_{1})\nu^{1/2}_{\alpha,\pm}(x,hD)\right)\nu^{1/2}_{\alpha,\pm}(x,hD)\chi^{2}_{\alpha,\nu}(x,hD)u\\
&+O\left(h^{1-\alpha/2}\norm{u}_{L^{2}(M)}\right).\end{align*}
So by Lemma \ref{lem:comm} 
\begin{align*}
&\norm{\nu_{\alpha,\pm}^{-1/2}(x,hD)\chi^{2}_{\alpha,\nu}(x,hD)\chi^{1}(h^{-2\alpha}x_{1})\nu(x,hD)\chi^{2}_{\alpha,\nu}u}_{L^{2}(M)}\\
&\lesssim\sup_{K\in[-1,1]}\norm{\nu^{1/2}(x,hD)\chi^{2}_{\alpha,\nu}(x,hD)u}_{L^{2}([K-h^{2\alpha},K+h^{2\alpha}]\times\R^{n-1})}+O(h^{1-\alpha/2}\norm{u}_{L^{2}(M)})\\
&\lesssim{}h^{\alpha}\norm{u}_{L^{2}}\end{align*}
as required. Now turning to the those terms involving an $E[\chi^{2}_{\alpha,\nu}(x,hD)v]$ term. 
$$E[\chi^{2}_{\alpha,\nu}(x,hD)v]=\chi^{2}_{\alpha,\nu}(x,hD)E[v]+h^{1-\alpha}r(x,hD)v.$$
Since $\nu(x,\xi)$ is localised to size $h^{\alpha}$ and $v$ is supported only in the $h^{2\alpha}$ thickened hypersurface $x_{1}=0$ Lemma \ref{lem:nearhypersurface} gives us
$$\norm{v}_{L^{2}(M)}\lesssim{}h^{\frac{3\alpha}{2}}\norm{u}_{L^{2}(M)},$$
so we may focus on the term involving $E[v]$. 
\begin{align*}
E[v]&=\chi^{1}(h^{-2\alpha}x_{1})\nu(x,hD)\chi^{2}_{\alpha,\nu}(x,hD)p(x,hD)u\\
&+h^{1-2\alpha}(\chi^{1})'(h^{-2\alpha}x_{1})\nu^{2}(x,hD)\chi^{2}_{\alpha,\nu}(x,hD)u\\
&+h\chi^{1}(h^{-2\alpha}x_{1})\dot{\nu}(x,hD)\left[\chi^{2}_{\alpha,\nu}(x,hD)+h^{-\alpha}(\chi^{2})'_{\alpha,\nu}(x,hD)\nu(x,hD)\right]u\\
&+O\left(h^{2-4\alpha}\norm{\nu(x,hD)\chi^{2}_{\alpha,\nu}(x,hD)u}_{L^{2}(M)}+h^{2-2\alpha}\norm{u}_{L^{2}(M)}\right).\end{align*}
The fourth term is $O(h\norm{u}_{L^{2}(M)})$ so we focus on the others. Since $u$ is an $O_{L^{2}}(h)$ quasimode of $p(x,hD)$ we have
$$\norm{\chi^{1}(h^{-2\alpha}x_{1})\nu(x,hD)\chi^{2}_{\alpha,\nu}(x,hD)p(x,hD)u}_{L^{2}(M)}\lesssim{}h^{1+\alpha}\norm{u}_{L^{2}(M)}$$
therefore
$$\norm{\nu^{-1/2}_{\alpha,\pm}(x,hD)\chi^{1}(h^{-2\alpha}x_{1})\nu(x,hD)\chi^{2}_{\alpha,\nu}(x,hD)p(x,hD)u}_{L^{2}(M)}\lesssim{}h^{1+\alpha/2}\norm{u}_{L^{2}(M)}$$
which is better than we require. To treat the second term note (using Lemmas \ref{lem:comm} and \ref{lem:nearhypersurface})
\begin{align*}h^{1-2\alpha}\nu^{-1/2}_{\alpha,\pm}&(x,hD)(\chi^{1})'(h^{-2\alpha}x_{1})\nu^{2}(x,hD)\chi^{2}_{\alpha,\nu}(x,hD)\\
&h^{1-2\alpha}\left(\nu^{-1/2}_{\alpha,\pm}(x,hD)(\chi^{1})'(h^{-2\alpha}x_{1})\nu^{1/2}_{\alpha,\pm}(x,hD)\right)\nu^{3/2}(x,hD)\chi^{2}_{\alpha,\nu}(x,hD)u+O(h^{2-5\alpha/2}\norm{u}_{L^{2}(M)})\\
&\lesssim{}h^{1-\alpha/2}\sup_{K\in[-1,1]}\norm{\chi^{2}_{\alpha,\nu}(x,hD)u}_{L^{2}([K-h^{2\alpha},K+h^{\alpha}]\times\R^{n-1})}\\
&\lesssim{}h\norm{u}_{L^{2}(M)}.\end{align*}
Finally
\begin{align*}h\nu^{-1/2}_{\alpha,\pm}&(x,hD)\chi^{1}(h^{-2\alpha}x_{1})\dot{\nu}(x,hD)\left[\chi^{2}_{\alpha,\nu}(x,hD)+h^{-\alpha}(\chi^{2})'_{\alpha,\nu}(x,hD)\nu(x,hD)\right]u\\
&=h\nu^{-1/2}_{\alpha,\pm}(x,hD)\chi^{1}(h^{-2\alpha}x_{1})\dot{\nu}(x,hD)\chi^{2}_{\alpha,\nu}(x,hD)u\\
&+h^{1-\alpha}\left(\nu^{-1/2}_{\alpha,\pm}(x,hD)\chi^{1}(h^{-2\alpha}x_{1})\nu^{1/2}_{\alpha,\pm}(x,hD)\right)\nu^{1/2}_{\alpha,\pm}(x,hD)\dot{\nu}(x,hD)(\chi^{2})'_{\alpha,\nu}(x,hD)u\\
&+O(h^{2-5\alpha/2}\norm{u}_{L^{2}(M)})\end{align*}
and so by applying Lemma \ref{lem:comm} and Theorem \ref{commutatorthm} to the first two terms we obtain
$$\norm{h\nu^{-1/2}_{\alpha,\pm}(x,hD)\chi^{1}(h^{-2\alpha}x_{1})\dot{\nu}(x,hD)\left[\chi^{2}_{\alpha,\nu}(x,hD)+h^{-\alpha}(\chi^{2})'_{\alpha,\nu}(x,hD)\nu(x,hD)\right]u}_{L^{2}(M)}\lesssim{}h\norm{u}_{L^{2}(M)}$$
Therefore we have
$$\norm{\nu(x,hD)\chi^{2}_{\alpha,\nu}(x,hD)u}_{L^{2}(H)}\lesssim{}h^{\alpha/2}\norm{u}_{L^{2}(M)}$$
and since on the support of $\chi^{2}$, $|\nu(x,\xi)|\geq h^{\alpha}/2$,
$$\norm{\nu^{1/2}_{\alpha,\pm}(x,hD)\chi^{2}_{\alpha,\nu}(x,hD)u}_{L^{2}(H)}\lesssim{}\norm{u}_{L^{2}(M)}.$$
\end{proof}

\begin{lem}\label{lem:nearhypersurface}
 Suppose $u$ is an $O_{L^{2}}(h)$ quasimode of $p(x,hD)$ then for $\alpha\leq{}1/3$ 
 \begin{equation}
  \norm{\nu^{1/2}_{\alpha,\pm}(x,hD)\chi^{2}_{\alpha,\nu}(x,hD)u}_{L^{2}([-h^{2\alpha},h^{2\alpha}]\times{}\R^{n-1})}\lesssim{}h^{\alpha}\norm{u}_{L^{2}(M)}\label{nearhypernuhalf}\end{equation}
  and
  \begin{equation}
  \norm{\chi^{2}_{\alpha,\nu}(x,hD)u}_{L^{2}([-h^{2\alpha},h^{2\alpha}]\times{}\R^{n-1})}\lesssim{}h^{\frac{\alpha}{2}}\norm{u}_{L^{2}(M)}\quad{}\label{nearhyperu}\end{equation}
   Further       \begin{equation}
  \norm{\nu(x,hD)\chi^{1}_{1/3,\nu}(x,hD)u}_{L^{2}([-h^{2/3},h^{2/3}]\times{}\R^{n-1})}\lesssim{}h^{\frac{1}{2}}\norm{u}_{L^{2}(M)}.\label{nearhypertang}\end{equation}

  \end{lem}
  
  \begin{proof}
  The proof of this is similar to that of Theorem \ref{commutatorthm}, however to obtain the finer cut-off we need to consider more terms in the expansion for the commutator symbol. For this finer analysis the quantisation procedure matters. We will use the Weyl quantisation. Let
  $$q(x,hD)^{w}u=\frac{1}{(2\pi{}h)^{n}}\iint{}e^{\frac{i}{h}\langle{}x-y,\xi\rangle}q\left(\frac{x+y}{2},\xi\right)u(y)dyd\xi.$$
  That is $q(x,hD)^{w}$ is the operator obtained under the Weyl quantisation procedure. Note that if $q\in{}S^{\alpha}$, 
  \begin{multline*}q(x,hD)^{w}u=\frac{1}{(2\pi{}h)^{n}}\iint{}e^{\frac{i}{h}\langle{}x-y,\xi\rangle}q(x,\xi)u(y)dyd\xi\\
  +\frac{h^{-\alpha}}{(2\pi{}h)^{n}}\iint{}e^{\frac{i}{h}\langle{}x-y,\xi\rangle}(x-y)\cdot{}r(x,y,\xi)u(y)dyd\xi\end{multline*}
  and integration by parts in $\xi$ tells us that
  $$q(x,hD)^{w}u=q(x,hD)u+O(h^{1-2\alpha}\norm{u}_{L^{2}(M)}).$$
 Incidentally this relationship holds for any two choices of quantisation procedure.  Consequently it is enough to establish \eqref{nearhypernuhalf}, \eqref{nearhyperu} and \eqref{nearhypertang} for the Weyl quantisation.  Let $\zeta(r)$ be such that $\zeta'(r)=(\chi^{1}(r))^{2}$. We will calculate the commutator
 $$[p(x,hD)^{w},\zeta(h^{-2\alpha}x_{1})]$$
 using the Weyl composition formula
 $$a(x,hD)^{w}\circ{}b(x,hD)^{w}=(a\#b)(x,hD)^{w}$$
 where
 $$a\#b(x,\xi)=e^{\frac{ih}{2}\left(\langle{}D_{\xi},D_{y}\rangle-\langle{}D_{x},D_{\eta}\rangle\right)}a(x,\xi)b(y,\eta)\Big|_{x=y,\xi=\eta.}$$
 The key point is that $\zeta$ is a function of $x_{1}$ alone and so terms involving even ordered derivatives cancel out. Therefore we obtain
 $$[p(x,hD)^{w},\zeta(h^{-2\alpha}x_{1})]f=ih^{1-2\alpha}(\chi^{1}(h^{-2\alpha}x_{1}))^{2}\nu(x,hD)^{w}f+h^{3-6\alpha}r(x,hD)f.$$
 Note that this is one power of $h^{1-2\alpha}$ better than we would obtain for any other quantisation procedure.  
So we have
\begin{multline}\langle{}g,(\chi^{1}(h^{-2\alpha}x_{1}))^{2}\nu(x,hD)^{w}f\rangle=h^{-1+2\alpha}\Big(\langle{}p(x,hD)^{w}g,\zeta(h^{-2\alpha}x_{1})f\rangle-\langle{}g,\zeta(h^{-2\alpha}x_{1})p(x,hD)^{w}f\rangle\Big)\\
+O(h^{2-4\alpha})\norm{f}_{L^{2}(M)}\norm{g}_{L^{2}(M)}.\end{multline}
To obtain \eqref{nearhypernuhalf} set $f=\nu^{-1/2}_{\alpha,\pm}(x,hD)^{w}\chi^{2}_{\alpha,\nu}(x,hD)^{w}u$ and $g=\nu^{1/2}_{\alpha,\pm}(x,hD)^{w}\chi^{2}_{\alpha,\nu}(x,hD)^{w}u$.  Now
\begin{multline*}
p(x,hD)^{w}g=\nu_{\alpha,\pm}^{1/2}(x,hD)^{w}\chi^{2}_{\alpha,\nu}(x,hD)^{w}p(x,hD)^{w}u\\
+h\nu^{-1/2}_{\alpha,\pm}(x,hD)^{w}\dot{\nu}(x,hD)^{w}r(x,hD)^{w}u+O(h^{2-2\alpha}\norm{u}_{L^{2}(M)}).\end{multline*}
where the symbol $r(x,\xi)$ is supported in the region $h^{\alpha}/2\leq |\nu(x,\xi)|\leq 5h^{\alpha}/2$. So
\begin{align}\begin{split}h^{-1+2\alpha}&\langle{}p(x,hD)^{w}g,\zeta(h^{-2\alpha}x_{1})f\rangle\\
&=h^{-1+2\alpha}\langle{}\nu_{\alpha,\pm}^{1/2}(x,hD)^{w}\chi^{2}_{\alpha,\nu}(x,hD)^{w}p(x,hD)^{w}u,\zeta(h^{-2\alpha}x_{1})\nu_{\alpha,\pm}^{-1/2}(x,hD)^{w}\chi^{2}_{\alpha,\nu}(x,hD)^{w}u\rangle\\
&+h\langle\nu^{-1/2}_{\alpha,\pm}(x,hD)^{w}\dot{\nu}(x,hD)^{w}r(x,hD)u,\nu^{-1/2}_{\alpha,\pm}(x,hD)^{w}\chi^{2}_{\alpha,\nu}(x,hD)u\rangle+O(h^{1-\alpha/2}\norm{u}_{L^{2}(M)}^{2}).\label{expandedprod}\end{split}\end{align}
The error term $O(h^{1-\alpha/2}\norm{u}_{L^{2}(M)}^{2})$ is better than required so we focus on the first two terms. We have
\begin{multline*}h^{-1+2\alpha}\langle{}\nu_{\alpha,\pm}^{1/2}(x,hD)^{w}\chi^{2}_{\alpha,\nu}(x,hD)p(x,hD)^{w}u,\zeta(h^{-2\alpha}x_{1})\nu_{\alpha,\pm}^{-1/2}(x,hD)^{w}\chi^{2}_{\alpha,\nu}(x,hD)^{w}u\rangle\\
=h^{-1+2\alpha}\langle{}\chi^{2}_{\alpha,\nu}(x,hD)^{w}p(x,hD)^{w}u,\nu^{1/2}_{\alpha,\pm}(x,hD)^{w}\zeta(h^{-2\alpha}x_{1})\nu^{-1/2}_{\alpha,\pm}(x,hD)^{w}\chi^{2}_{\alpha,\nu}(x,hD)^{w}u\rangle.\end{multline*}
Since
$$\nu^{\pm 1/2}_{\alpha,\pm}(x,hD)^{w}=\nu^{\pm 1/2}_{\alpha,\pm}(x,hD)+O_{L^{2}(M)\to{}L^{2}(M)}(h^{1-2\alpha})$$
and Lemma \ref{lem:comm} tells us that $\nu^{1/2}_{\alpha,\pm}(x,hD)\zeta(h^{-2\alpha}x_{1})\nu^{-1/2}_{\alpha,\pm}(x,hD)$ is a bounded operator from $L^{2}(M)\to{}L^{2}(M)$. We conclude that
$$h^{-1+2\alpha}\left|\langle{}\nu_{\alpha,\pm}^{1/2}(x,hD)^{w}p(x,hD)^{w}u,\zeta(h^{-2\alpha}x_{1})\nu_{\alpha,\pm}^{-1/2}(x,hD)^{w}u\rangle\right|\lesssim{}h^{2\alpha}\norm{u}_{L^{2}(M)}^{2}.$$
Now consider the second term in \eqref{expandedprod} we have
\begin{align*}
h^{2\alpha}&\langle{}\nu^{-1/2}(x,hD)^{w}\dot{\nu}(x,hD)^{w}r(x,hD)^{w}u,\zeta(h^{-2\alpha}x_{1})\nu^{-1/2}_{\alpha,\pm}(x,hD)^{w}\chi^{2}_{\alpha,\nu}(x,hD)u\rangle\\
&=h^{2\alpha}\langle{}\nu^{-1/2}(x,hD)v_{\alpha},\dot{\nu}^{1/2}_{\alpha,\pm}(x,hD)\zeta(h^{-2\alpha}x_{1})\dot{\nu}^{-1/2}_{\alpha,\pm}(x,hD)\nu^{-1/2}_{\alpha,\pm}(x,hD)\tilde{v}_{\alpha}\rangle\\
&+O\left(h\norm{u}_{L^{2}(M)}\right)\end{align*}
where both $v_{\alpha}$ and $\tilde{v}_{\alpha}$ have the form $\dot{\nu}^{1/2}_{\alpha,\pm}(x,hD)\rho(x,hD)$ with symbol $\rho(x,\xi)$ supported where $h^{\alpha}/2\leq|\nu(x,\xi)|\leq 5h^{\alpha}/2$.  So by Corollary \ref{cor:sqrt}
$$\norm{v_{\alpha}}_{L^{2}(M)}\lesssim{}h^{\alpha/2}\norm{u}_{L^{2}(M)}\quad{}\norm{\tilde{v}_{\alpha}}_{L^{2}(M)}\lesssim{}h^{\alpha/2}\norm{u}_{L^{2}(M)}$$
and
$$\norm{\nu^{-1/2}(x,hD)^{w} v_{\alpha}}_{L^{2}(M)}\lesssim{}\norm{u}_{L^{2}(M)}\quad{}\norm{\nu^{-1/2}(x,hD)^{w} \tilde{v}_{\alpha}}_{L^{2}(M)}\lesssim{}\norm{u}_{L^{2}(M)}.$$
Therefore  by Lemma \ref{lem:comm}
\begin{align*}h^{2\alpha}&\left|\langle{}\nu^{-1/2}(x,hD)^{w}\dot{\nu}(x,hD)^{w}r(x,hD)^{w}u,\zeta(h^{-2\alpha}x_{1})\nu^{-1/2}_{\alpha,\pm}(x,hD)\chi^{2}_{\alpha,\nu}(x,hD)u\rangle\right|\\
&\lesssim{}h^{2\alpha}\norm{\nu^{-1/2}_{\alpha,\pm}(x,hD)v_{\alpha}}_{L^{2}(M)}\norm{\nu^{-1/2}_{\alpha,\pm}(x,hD)\tilde{v}_{\alpha}}_{L^{2}(M)}+O(h^{1-\alpha}\norm{u}_{L^{2}}^{2})\\
&\lesssim{}h^{2\alpha}\norm{u}_{L^{2}}^{2}.\end{align*}
A similar argument gives
$$\left|\langle{}g,\zeta(h^{-2\alpha}x_{1})p(x,hD)^{w}f\rangle\right|\lesssim{}h^{2\alpha}\norm{u}_{L^{2}}^{2}$$ and therefore we arrive at
$$\norm{\nu_{\alpha,\pm}^{1/2}(x,hD)\chi^{2}_{\alpha,\nu}(x,hD)u}_{{L^{2}([-h^{2\alpha},h^{2\alpha}]\times{}\R^{n-1})}}\lesssim{}h^{\alpha}\norm{u}_{L^{2}(M)}.$$
To obtain \eqref{nearhyperu} and \eqref{nearhypertang} we set 
$$f=(\nu(x,hD)^{w})^{-1}\chi^{2}_{\alpha,\nu}(x,hD)^{w}u\quad{}g=\chi^{2}_{\alpha,\nu}(x,hD)^{w}u$$
and
$$f=\chi^{1}_{\alpha,\nu}(x,hD)^{w}u\quad{}g=\nu(x,hD)^{w}\chi^{1}_{\alpha,\nu}(x,hD)^{w}u$$
respectively and apply a similar argument.
\end{proof}

 Finally we complete the proof of Theorem \ref{semiclassicalthm} by proving that the tangential contribution is bounded. 
 \begin{thm}\label{thm:tang}
Let $u$ be an $O_{L^{2}}(h)$ quasimode of $p(x,hD)$, 
\begin{equation}\norm{\nu(x,hD)\chi^{1}_{1/3,\nu}(x,hD))}_{L^{2}(H)}\lesssim{}\norm{u}_{L^{2}(M)}.\label{general-tang}\end{equation}
\end{thm}

\begin{proof}
From Lemma \ref{lem:nearhypersurface} we know that $$\norm{\chi^{1}(h^{-2/3})\nu(x,hD)\chi^{1}_{\alpha,\nu}(x,hD)v}_{L^{2}(M)}\lesssim{}h^{1/2}\norm{u}_{L^{2}(M)}.$$
Let $v=\nu(x,hD)\chi^{1}_{\alpha,\nu}(x,hD)u$, $u$ is semiclassically localised therefore there exists a $\chi(x,\xi)$ compactly supported so that
$$v=\nu(x,hD)\chi^{1}_{\alpha,\nu}(x,hD)\chi(x,hD)u+O(h^{\infty}).$$
Since the support of $\chi$ is compact there is a point $(x_{0},\xi_{0})$ and a $R>0$ such that $\chi\equiv{}0$ outside $B_{R}(x_{0},\xi_{0})$. Let $\tilde{\chi}$ be a smooth cut off function defined so that
$$\chi(x,\xi)=\begin{cases}
1&|(x,\xi)-(x_{0},\xi_{0})|\leq{}2R\\
0&|(x,\xi)-(x_{0},\xi_{0})|\geq{}4R.\end{cases}$$
Then $\tilde{\chi}(x,hD)v=v+O(h^{\infty})$ since the derivatives of $\tilde{\chi}$ in the composition formula \eqref{semiexp} are zero on the support of $\chi(x,\xi)$. Finally let
$$\rho(x,\xi)=\begin{cases}
1&|(x,\xi)-(x_{0},\xi_{0})|\leq{}6R \text{ and }|x_{1}|\leq{}1\\
0&|(x,\xi)-(x_{0},\xi_{0})|\geq{}8R\text{ or }|x_{1}|\geq{}2.\end{cases}$$
Then $\rho(x,hD)\chi^{1}(h^{-2/3}x_{_1})v=\chi^{1}(h^{-2/3}x_{1})v+O(h^{\infty})$ since similarly derivatives of $\rho$ are zero on the support of $\chi^{1}(h^{-2/3}x_{1})\tilde{\chi}(x,\xi)$. Therefore $\chi^{1}(h^{-2/3}x_{1})v$ is semiclassically localised and  from the standard semiclassical Sobolev estimates
$$\norm{v}_{L^{2}(H)}\lesssim{}h^{-1/2}\norm{\chi^{1}(h^{-2/3}x_{1})v}_{L^{2}(M)}\lesssim{}\norm{u}_{L^{2}(M)}.$$

\end{proof}
  
  Where the symbol $p(x,\xi)$ arises from a Laplacian or similar operator we can do better. 
 
\begin{defin}\label{laplacelike}
A semiclassical pseudodifferential operator is Laplace-like with respect to a hypersurface $H$ if in local coordinates
$$|\partial_{\xi_{1}}\nu(x,\xi)|>c>0.$$
\end{defin}

\begin{thm}\label{thm:LL}
Let $u$ be an $O_{L^{2}}(h)$ quasimode of $p(x,hD)$, which is Laplace-like. Then

\begin{equation}\norm{\nu(x,hD)\chi^{1}_{1/3,\nu}(x,hD)u}_{L^{2}(H)}\lesssim{}h^{\frac{1}{6}}\norm{u}_{L^{2}(M)}.\label{LL-tang-norm}\end{equation}
\end{thm}

\begin{proof}
The proof is similar to that of Theorem \ref{thm:tang}. However if $\nu_{\xi}(x,\xi)$ is bounded away from zero, the restriction of $\nu(x,\xi)$ to an order $h^{1/3}$ region restricts $\xi_{1}$ to a $h^{1/3}$ region. Suppose that $\tilde{\chi}^{1}(r)$ has a slightly larger support that $\chi^{1}(r)$ (specifically $\tilde{\chi}^{1}\equiv{}1$ on the support of $\chi^{1}$), then
$$\tilde{\chi}^{1}_{1/3,\nu}(x,hD)v=v+O(h^{\infty}).$$
That is
$$v(0,x')=\frac{1}{(2\pi{}h)^{n}}\int{}e^{\frac{i}{h}(-y_{1}\xi_{1}+\langle{}x'-y',\xi'\rangle)}\tilde{\chi}^{1}_{1/3,\nu}v(y)dyd\xi.$$
Note that when $|y_{1}|>Kh^{2/3}$ for some suitably large $K$ we can integrate by parts in $\xi_{1}$ and obtain decay of $h^{2N/3}|y|^{-N}$ for any $N$. Therefore we can say that
$$v(0,x')=\frac{1}{(2\pi{}h)^{n}}\int{}e^{\frac{i}{h}(-y_{1}\xi_{1}+\langle{}x'-y',\xi'\rangle)}\rho^{1}_{1/3,\nu}\zeta(h^{-2/3}y_{1})v(y)dyd\xi.$$
where $\rho$ has the same support properties as $\tilde{\chi}^{1}$ and $\zeta(r)$ decays like $(1+|r|)^{-N}$. Now since $|\partial_{\xi_{1}}\nu(x,\xi)|>c>0$ we may the change of variables $\bar{\xi}_{1}=\nu(x,\xi)$ to obtain
$$v(0,x')=\frac{1}{(2\pi{}h)^{n}}\int{}e^{\frac{i}{h}(-y_{1}\xi_{1}(x,\bar{\xi}_{1},\xi')+\langle{}x'-y',\xi'\rangle)}\rho^{1}(h^{-1/3}\bar{\xi}_{1})\zeta(h^{-2/3}y_{1})v(y)dyd\xi.$$
Now applying standard $L^{2}$ estimates for the dashed variables we obtain
$$\norm{v}_{L^{2}(H)}\lesssim{}h^{-1/3}\norm{\zeta(h^{-2/3}x_{1})v}_{L^{2}(M)}.$$
Since Lemma \ref{lem:nearhypersurface} tells us that
$$\norm{\zeta(h^{-2/3}x_{1})v}_{L^{2}}\lesssim{}h^{1/2}\norm{u}_{L^{2}}$$
we obtain
$$\norm{v}_{L^{2}(H)}\lesssim{}h^{1/6}\norm{u}_{L^{2}(M)}.$$
\end{proof}

\section{Curved Hypersurfaces}\label{sec:curved}

Theorem \ref{commutatorthm} is independently interesting and actually allows us to reproduce results on restriction estimates of curved hypersurfaces in an elementary fashion. These results (due to Tataru \cite{tataru98} and Hu \cite{Hu} for Laplacians and Hassell-Tacy \cite{HTacy} for semiclassical operators) state that under conditions.
\begin{enumerate}
\item For any point $(x_{0},\xi_{0})$ such that $p(x_{0},\xi_{0})=0$; $\nabla_{\xi}p(x_{0},\xi_{0})\neq{}0.$
\item The hypersurface $\{\xi\mid{}p(x_{0},\xi)=0\}$ has positive definite fundamental form
\item For a boundary defining function $r$ we have $\dot{r}(x_{0},\xi_{0})=0\Rightarrow{}\ddot{r}(x_{0},\xi_{0})\neq{}0$,
\end{enumerate}
quasimodes $u$ of order $h$ obey
$$\norm{u}_{L^{2}(H)}\lesssim{}h^{-1/6}\norm{u}_{L^{2}(M)}$$
which is an improvement over the standard $h^{-1/4}$ bound (that holds when we assume only (1) and (2)).

Since the function $x_{1}$ is a boundary defining function for $H$ and 
$$\dot{x}_{1}(x,\xi)=\nu(x,\xi)\quad\ddot{x_{1}}(x,\xi)=\{p,\nu\}(x,\xi)$$ 
the third condition is equivalent to stating that $|\dot{\nu}(x,\xi)|>c$ whenever we are localised around a point $(x_{0},\xi_{0})$ such that $\nu(x_{0},\xi_{0})=0$. If we are localised about a point where $\nu(x_{0},\xi_{0})\neq{}0$ we can treat the restriction of $u$ to $H$ as in Theorem \ref{thm:nontang} and obtain
$$\norm{u}_{L^{2}(H)}\lesssim{}\norm{u}_{L^{2}(M)}$$
so any concentration must come for regions localised around $\nu(x_{0},\xi_{0})=0$. The second condition implies $\partial_{\xi_{1}}\nu(x,\xi)=\partial^{2}_{\xi_{1}\xi_{1}}p(x,\xi)$ is bounded away from zero (as it is for the Laplacian case where $\partial_{\xi_{1}\xi_{1}}^{2}\nu(x,\xi)=2$). 

\begin{prop}\label{curvedest}
Suppose $|\partial_{\xi_{1}}\nu(x,\xi)|$ and $|\dot{\nu}(x,\xi)|$ are bounded away from zero. Then if $u$ is an $O_{L^{2}}(h)$ quasimode of $p(x,hD)$
$$\norm{u}_{L^{2}(H)}\lesssim{}h^{-1/6}\norm{u}_{L^{2}(M)}.$$
\end{prop}

\begin{proof}
Let $\alpha\leq1/3$ by Theorem \ref{thm:nontang} we know that
$$\norm{\nu(x,hD)\chi^{2}_{\alpha,\nu}(x,hD)u}_{L^{2}(H)}\lesssim h^{\alpha/2}\norm{u}_{L^{2}(M)}$$
therefore, as on the support of $\chi^{2}_{\alpha,\nu}(x,\xi)$, $|\nu(x,\xi)|>h^{\alpha}$ 
$$\norm{\chi^{2}_{\alpha,\nu}(x,hD)u}_{L^{2}(H)}\lesssim{}h^{-\alpha/2}\norm{u}_{L^{2}(M)}.$$ 
Similarly 
$$\norm{\chi^{2}_{\alpha,-\nu}(x,hD)u}_{L^{2}(H)}\lesssim{}h^{-\alpha/2}\norm{u}_{L^{2}(M)}.$$ 
So by dyadically summing 
$$\norm{\chi^{3}_{\alpha,\nu}(x,hD)u}_{L^{2}(H)}+\norm{\chi^{3}_{\alpha,-\nu}(x,hD)u}_{L^{2}(H)}\lesssim{}h^{-1/6}\norm{u}_{L^{2}(M)}.$$
Finally we must consider the tangential term. By Theorem \ref{commutatorthm}
$$\norm{\dot{\nu}(x,hD)\chi^{1}_{1/3.\nu}(x,hD)u}_{L^{2}(M)}\lesssim{}h^{1/6}\norm{u}_{L^{2}(M)}$$
and since assumption (3) ensures $|\dot{\nu}(x,\xi)|>c>0$ we may invert $\dot{\nu}(x,hD)$ to obtain
$$\norm{\chi^{1}_{1/3,\nu}(x,hD)u}_{L^{2}(M)}\lesssim{}h^{1/6}\norm{u}_{L^{2}(M)}.$$
Now clearly for the Laplacian itself we may apply the results of Theorem \ref{thm:LL} to obtain
$$\norm{\chi^{1}_{1/3,\nu}(x,hD)u}_{L^{2}(H)}\lesssim{}h^{-1/3}\norm{\chi^{1}_{1/3,\nu}(x,hD)}_{L^{2}(M)}\lesssim{}h^{-1/6}\norm{u}_{L^{2}(M)}.$$
\end{proof}

\section{Saturation and examples}\label{sec:sharp}

We will study four examples to illustrate sharpness. In $T^{\star}\R^{n}$ we write a point $(x,\xi)$ as $(x_{1},x_{2},\bar{x},\xi_{1},\xi_{2},\bar{\xi})$, and denote $x'=(x_{1},\bar{x})$, $\xi'=(\xi_{1},\bar{\xi})$. For these examples the hypersurface is always $\{x\mid x_{1}=0\}$. 

\begin{enumerate}
\item $p(x,\xi)=\xi_{2}-|\xi'|^{2}$
\item $p(x,\xi)=\xi_{2}-|\xi'|^{2}-x_{1}$
\item $p(x,\xi)=\xi_{1}\xi_{2}$
\item $p(x,\xi)=\xi_{1}\xi_{2}-x_{2}$
\end{enumerate}
For all the examples we will use the semiclassical Fourier transform
$$\mathcal{F}_{h}u=\frac{1}{(2\pi{}h)^{n/2}}\int{}e^{\frac{i}{h}x\cdot\xi}u(y)dyd\xi$$
to construct quasimodes. With this scaling $\mathcal{F}_{h}$ has the nice property that
$$\norm{u}_{L^{2}}=\norm{\mathcal{F}_{h}u}_{L^{2}}$$
so we may solve on the Fourier side and then invert to produce $u$. 

\begin{example}\label{ex1}
Let $p(x,\xi)=\xi_{2}-|\xi'|^{2}$, then $\nu(x,\xi)=-2\xi_{1}$. This is a model for the flat Laplacian localised in a region where $|\xi_{2}|\sim{}1$. Taking the Fourier transform we find that a quasimode of $p(x,hD)$ has
$$(\xi_{2}-|\xi'|^{2})\mathcal{F}_{h}u=O_{L^{2}}(h)$$
we write $\xi=(\xi_{1},\xi_{2},\bar{\xi})$. Let
$$\chi_{\alpha}(\xi)=\begin{cases}
1&|\xi_{2}-\xi_{1}^{2}|\leq{}h,h^{\alpha}\leq{}\xi_{1}\leq{}2h^{\alpha},|\bar{\xi}|\leq{}h^{1/2}\\
0&\mbox{otherwise.}\end{cases}.$$
Now set
$$f_{\alpha}(\xi)=h^{-1/2-\alpha/2-(n-2)/4}\chi_{\alpha}(\xi)$$
which is $L^{2}$ normalised and let $u_{\alpha}=\mathcal{F}_{h}^{-1}f_{\alpha}$. Note that $u_{\alpha}$ is localised where $\nu(x,\xi)\sim{}h^{\alpha}$. Finally we have
$$u_{\alpha}|_{H}=\frac{h^{-1/2-\alpha/2-(n-2)/4}}{(2\pi{}h)^{n/2}}\int{}e^{\frac{i}{h}x'\cdot{}\xi'}\chi_{\alpha}(\xi)d\xi.$$
Note that for $|\bar{x}|\leq{}h^{1/2}$ the term $e^{\frac{i}{h}\langle{}\bar{x},\bar{\xi}\rangle}$ does not oscillate very much.  Similarly for $|x_{2}|\leq{}h^{1-2\alpha}$ the $e^{\frac{i}{h}x_{2}\cdot\xi_{2}}$ term does not oscillate. So for
$|x_{2}|\leq{}h^{2\alpha}$, $|\bar{x}|\leq{}h^{1/2}$ we have
$$|u_{\alpha}|>h^{-1/2-\alpha/2-(n-2)/4-n/2}\cdot{}h^{1+\alpha+(n-2)/2}>h^{1/2+\alpha/2+(n-2)/4-n/2}$$
and
$$\norm{u_{\alpha}}_{L^{2}_{x'}}>h^{1/2+\alpha/2+(n-2)/4-n/2}h^{1/2-\alpha}h^{(n-2)/4}=h^{-\alpha/2}.$$
Therefore
$$\norm{\nu(x,hD)u_{\alpha}}_{L^{2}(H)}>ch^{\alpha/2}.$$
\end{example}

\begin{example}\label{ex2}
Let $p(x,\xi)=\xi_{2}-|\xi'|^{2}-x_{1}$, then again $\nu(x,\xi)=2\xi_{1}$. This is a model for a Laplacian (again localised where $|\xi_{2}|\sim{}1$) near a curved hypersurface.  Again we solve this on the Fourier side. We require
$$(hD_{\xi_{1}}-\xi_{2}+|\xi'|^{2})f(\xi)=O_{L^{2}}(h)$$
which is satisfied by 
 $$e^{\frac{i}{h}(\xi_{1}(-\xi_{2}+\bar{\xi}^{2})+\frac{1}{3}\xi_{1}^{3})}.$$ To localise this quasimode we place a cut of function in $|\xi|$
$$f(\xi)=\chi^{1}(|\xi|)e^{\frac{i}{h}(\xi_{1}(\xi_{2}-\bar{\xi}^{2})+\frac{1}{3}\xi_{1}^{3})}.$$
Note that $f$ is $L^{2}$ normalised. Again we set
$$u=\mathcal{F}_{h}^{-1}(f).$$
Notice that for this example it is impossible to concentrate a quasimode in the region where $|\nu(x,\xi)|\sim{}h^{\alpha}$. This is due to the curvature which means that the acceleration $\dot{\nu}(x,\xi)=1$, see Proposition \ref{curvedest}. Now
 $$\nu(x,hD)\chi^{2}_{\alpha,\nu}(x,hD)u|_{H}=\frac{1}{(2\pi{}h)^{n/2}}\int{}e^{-\frac{i}{h}(x'\cdot\xi'+\xi_{1}(\xi_{2}-\bar{\xi}^{2})+\frac{1}{3}\xi_{1}^{3})}2\xi_{1}\chi^{2}(h^{-\alpha}|\xi_{1}|)d\xi.$$
We will calculate the $\xi_{1}$ integral via stationary phase. Let
$$\phi(\xi)=-\xi_{1}\xi_{2}+\xi_{1}|\bar{\xi}|^{2}+\frac{1}{3}\xi_{1}^{3}.$$
There is a critical point at
$$\xi_{1}=\sqrt{\xi_{2}-|\bar{\xi}|^{2}}$$
and
$$\partial_{\xi_{1}\xi_{1}}^{2}\phi=2\xi_{2}\sim{}h^{\alpha}.$$
Now the symbol has derivatives no worse than $h^{(1-\alpha)/2}$ so for some $c_{1}\leq{}g(\xi)\leq{}c_{2}$
 $$\nu(x,hD)\chi^{2}_{\alpha,\nu}(x,hD)u=\frac{h^{\alpha}\cdot{}h^{(1-\alpha)/2}\cdot{}h^{-1/2}}{(2\pi{}h)^{(n-1)/2}}\int{}e^{-\frac{i}{h}x'\cdot\xi'+\frac{2}{3}(\xi_{2}-|\bar{\xi}|^{2})^{3/2}}g(\xi)d\xi$$
$$=h^{\frac{\alpha}{2}}\mathcal{F}_{h,n-1}^{-1}[e^{\frac{2i}{3h}(\xi_{2}-|\bar{\xi}|^{2})}g(\xi)]$$
 where $\mathcal{F}_{h,n-1}$ is the $n-1$ dimensional semiclassical Fourier transform. Since $\mathcal{F}_{h,n-1}$ preserves $L^{2}$ norms we have
 $$\norm{\nu(x,hD)\chi^{2}_{\alpha,\nu}(x,hD)u}_{L^{2}(H)}>ch^{\alpha/2}.$$

\end{example}

\begin{example}\label{ex3}
Let $p(x,\xi)=\xi_{2}\xi_{1}$, in this case $\nu(x,\xi)=\xi_{2}$. This symbol does not satisfy the admissibility conditions of \cite{tacy09} to have good restriction bounds (that is it is not Laplace-like). In fact we can construct examples such that
$$\norm{u}_{L^{2}(H)}>h^{-\frac{1}{2}}\norm{u}_{L^{2}(M)}.$$
However these examples require that $\nu(x,\xi)=\xi_{2}\sim{}h$. Again we construct examples on the Fourier side where we must have
$$(\xi_{1}\xi_{2}-1)f_{\alpha}=O_{L^{2}}(h).$$
Let
$$f_{\alpha}=h^{-\frac{1}{2}}\chi^{2}(h^{-\alpha}|\xi_{2}|)\chi^{1}(h^{-1+\alpha}|\xi_{1}|)\chi^{1}(|\bar{\xi}|)$$
and $u_{\alpha}=\mathcal{F}_{h}^{-1}[f_{\alpha}]$.
So
$$\nu(x,hD)u_{\alpha}|_{H}=\frac{h^{-\frac{1}{2}}}{(2\pi{}h)^{n/2}}\int{}e^{\frac{i}{h}x'\cdot{}\xi'}\xi_{2}\chi^{2}(h^{-\alpha}|\xi_{2}|)\chi^{1}(h^{-1+\alpha}|\xi_{1}|)\chi^{1}(|\bar{\xi}|)d\xi.$$
Now for $|x|_{2}<h^{1-\alpha}$ and $|\bar{x}|\leq{}h$ the $e^{\frac{i}{h}x'\cdot\xi'}$ factor does not significantly oscillate. So in this region
$$|\nu(x,hD)u_{\alpha}|>h^{\alpha-\frac{1}{2}-\frac{n}{2}}$$
and so
$$\norm{\nu(x,hD)u_{\alpha}}_{L^{2}(H)}>ch^{\alpha/2}.$$
Note that as $\nu(x,\xi)$ is a function of $\xi$ alone we may define localised all the way down to $\alpha=1$ and these saturating examples continue to hold up to that scale.
\end{example}

\begin{example}\label{ex4}
Finally let $p(x,\xi)=\xi_{1}\xi_{2}-x_{2}$, again $\nu(x,\xi)=\xi_{2}$. Like Example \ref{ex3} this symbol is not admissible as in \cite{tacy09}, however it is curved in the sense that $\dot{\nu}(x,\xi)=-1$. On the Fourier side we need to solve
$$(hD_{\xi_{2}}-\xi_{1}\xi_{2})f=O_{L^{2}}(h).$$
Let
$$f_{\alpha}(\xi)=h^{-\frac{1}{2}+\alpha}\chi^{1}(h^{-1+2\alpha}|\xi_{1}|)e^{\frac{i}{h}\xi_{1}\xi_{2}^{2}}$$
and $u_{\alpha}=\mathcal{F}_{h}^{-1}[f_{\alpha}]$. So
$$\nu(x,hD)u_{\alpha}|_{H}=\frac{h^{-\frac{1}{2}+\alpha}}{(2\pi{}h)^{n/2}}\int{}e^{-\frac{i}{x}(x'\cdot\xi'+\xi_{1}\xi_{2}^{2})}\xi_{2}\chi^{2}(h^{-\alpha}|\xi_{2}|)d\xi.$$
Now for $|x_{2}|<h^{1-\alpha}$ and $|\bar{x}|<h$ the factor
$$e^{-\frac{i}{x}(x'\cdot\xi'+\xi_{1}\xi_{2}^{2})}$$
does not oscillate significantly so
$$|\nu(x,hD)u_{\alpha}|>ch^{\frac{1}{2}+\alpha-\frac{n}{2}}$$
and
$$\norm{\nu(x,hD)u_{\alpha}}_{L^{2}(H)}>ch^{\alpha/2}.$$
\end{example}

The final two examples are not Laplace-like so for these Theorem \ref{thm:tang} only tells us
$${\nu(x,hD)\chi^{1}_{1/3,\nu}(x,hD)}_{L^{2}(H)}\lesssim{}\norm{u}_{L^{2}}.$$
However it appears from these examples that the strong $h^{\alpha/2}$ bounds should still hold in these cases. It is therefore likely that better tangential estimates for these types of operators would be possible from a more fine analysis of the dynamics.  If we write
$$u=\int{}K(x,y)u(y)dy$$
where $K$ is a reproducing kernel and
$$K(x,y)=e^{\frac{i}{h}\phi(x,y)}b(x,y)$$
we can study the $L^{2}$ norm of the restriction of $u$ to $H$ by studying the canonical relation associated with the phase function $\phi$. In examples \ref{ex1} and \ref{ex4} the phase functions
are associated with one-sided folds as studied by Greeleaf-Seeger in \cite{greenleaf94}. Example \ref{ex2} is associated with a two sided fold as in Pan-Sogge \cite{pan90}. This suggests that to obtain sharp results in the tangential setting it would be necessary to classify symbols in terms of the associated canonical relations. In the Laplacian case Galkowski \cite{galkowski} has recently examined the sharp tangential behaviour (dependent of on the curvature of H) but just such an analysis. However were $p(x,\xi)$ is not Laplace-like the tangential question remains open.

\bibliography{references}

\begin{thebibliography}{10}

\bibitem{BLR}
C.~Bardos, G.~Lebeau, and J.~Rauch.
\newblock Sharp sufficient conditions for the observation, control, and
  stabilization of waves from the boundary.
\newblock {\em SIAM J. Control Optim.}, 30(5):1024--1065, 1992.

\bibitem{BGT}
N.~Burq, P.~G{\'e}rard, and N.~Tzvetkov.
\newblock Restrictions of the {L}aplace-{B}eltrami eigenfunctions to
  submanifolds.
\newblock {\em Duke Math. J.}, 138(3):445--486, 2007.

\bibitem{CHT}
H.~Christianson, A.~Hassell, and J.~A. Toth.
\newblock Exterior mass estimates and {$L^2$}-restriction bounds for {N}eumann
  data along hypersurfaces.
\newblock {\em Int. Math. Res. Not. IMRN}, (6):1638--1665, 2015.

\bibitem{galkowski}
J.~Galkowski.
\newblock The {$L^{2}$} behavior of eigenfunctions near the glancing set.
\newblock {\em arXiv:1604.01699v1}, 2016.

\bibitem{GL93}
P.~G{\'e}rard and {\'E}.~Leichtnam.
\newblock Ergodic properties of eigenfunctions for the {D}irichlet problem.
\newblock {\em Duke Math. J.}, 71(2):559--607, 1993.

\bibitem{greenleaf94}
A.~Greenleaf and A.~Seeger.
\newblock Fourier integral operators with fold singularities.
\newblock {\em J. Reine Angew. Math.}, 455:35--56, 1994.

\bibitem{HTacy}
A.~Hassell and M.~Tacy.
\newblock Semiclassical {$L^p$} estimates of quasimodes on curved
  hypersurfaces.
\newblock {\em J. Geom. Anal.}, 22(1):74--89, 2012.

\bibitem{HTao02}
A.~Hassell and T.~Tao.
\newblock Upper and lower bounds for normal derivatives of {D}irichlet
  eigenfunctions.
\newblock {\em Math. Res. Lett.}, 9(2-3):289--305, 2002.

\bibitem{HTao10}
A.~Hassell and T.~Tao.
\newblock Erratum for ``{U}pper and lower bounds for normal derivatives of
  {D}irichlet eigenfunctions''.
\newblock {\em Math. Res. Lett.}, 17(4):793--794, 2010.

\bibitem{Hu}
R.~Hu.
\newblock {$L^p$} norm estimates of eigenfunctions restricted to submanifolds.
\newblock {\em Forum Math.}, 21(6):1021--1052, 2009.

\bibitem{pan90}
Y.~Pan and C.~Sogge.
\newblock Oscillatory integrals associated to folding canonical relations.
\newblock {\em Colloq. Math.}, 60/61(2):413--419, 1990.

\bibitem{R40}
F.~Rellich.
\newblock Darstellung der {E}igenwerte von {$\Delta u+\lambda u=0$} durch ein
  {R}andintegral.
\newblock {\em Math. Z.}, 46:635--636, 1940.

\bibitem{tacy09}
M.~Tacy.
\newblock Semiclassical {$L^p$} estimates of quasimodes on submanifolds.
\newblock {\em Comm. Partial Differential Equations}, 35(8):1538--1562, 2010.

\bibitem{tataru98}
D.~Tataru.
\newblock On the regularity of boundary traces for the wave equation.
\newblock {\em Ann. Scuola Norm. Sup. Pisa Cl. Sci. (4)}, 26(1):185--206, 1998.

\bibitem{Zworski12}
M.~Zworski.
\newblock {\em Semiclassical analysis}, volume 138 of {\em Graduate Studies in
  Mathematics}.
\newblock American Mathematical Society, Providence, RI, 2012.

\end{thebibliography}
\bibliographystyle{abbrv} 

\end{document}